\theoremstyle{theorem}
\newtheorem{satz}{Theorem}[section]
\newtheorem{lemma}[satz]{Lemma}
\newtheorem{kor}[satz]{Corollary}
\newtheorem{thmx}{Theorem}
\newtheorem{prop}[satz]{Proposition}
\theoremstyle{definition}
\newtheorem{defi}[satz]{Definition}
\newtheorem{bem}[satz]{Remark}
\newtheorem{ex}[satz]{Example}
\newcommand{\dad}[1]{\mathrm{dad}(#1)}   %dad{} command
\newcommand{\RR}{\mathbb R}
\newcommand{\NN}{\mathbb N}
\newcommand{\ZZ}{\mathbb Z}
\newcommand{\id}{\text{id}}
\title[On dynamic asymptotic dimension]{On the dynamic asymptotic dimension of \'etale groupoids}
\author[C.\ B\"onicke]{Christian B\"onicke}
\address{School of Mathematics, Statistics and Physics, Newcastle University, Newcastle upon Tyne NE1 7RU, United Kingdom}
\email{christian.bonicke@ncl.ac.uk}
\date{\today}
\subjclass[2010]{}
\keywords{}
\begin{document}
\begin{abstract}
We investigate the dynamic asymptotic dimension for \'etale groupoids introduced by Guentner, Willett and Yu. In particular, we establish several permanence properties, including estimates for products and unions of groupoids. We also establish invariance of the dynamic asymptotic dimension under Morita equivalence.
In the second part of the article, we consider a canonical coarse structure on an \'etale groupoid, and compare the asymptotic dimension of the resulting coarse space with the dynamic asymptotic dimension of the underlying groupoid.
\end{abstract}
\maketitle

\section{Introduction}
Dynamic asymptotic dimension is a dimension theory for dynamical systems introduced by Guentner, Willett and Yu in \cite{GWY17} in the general framework of \'etale groupoids. 
Since its inception, the concept has found numerous applications: it provides upper bounds on the nuclear dimension of the resulting groupoid $\mathrm{C}^*$-algebras \cite{GWY17}, even in the presence of a twist \cite{CDGaHV22}. It is also closely related to the diagonal dimension of sub-$\mathrm{C}^*$-algebras recently introduced in \cite{LLW23}.
In \cite{BDGW23} it was shown that the groupoid homology groups of a totally disconnected \'etale groupoid vanish in all degrees exceeding the dynamic asymptotic dimension of the groupoid. 
Estimates on the dynamic asymptotic dimension (denoted henceforth by $\mathrm{dad}(\cdot)$) are known for many concrete classes of examples (see e.g. \cite{ALSS21,CJMSTD20,DS18,GWY17}). 
However, the basic theory is still not very well developed in the literature. 
The aim of this article is to remedy this situation and provide a set of general results to compute the dynamic asymptotic dimension of \'etale groupoids.

\bigskip

To this end we prove several permanence properties of dynamic asymptotic dimension, which are summarised in the theorem below.
\begin{thmx} Let $G$ and $H$ be \'etale Hausdorff groupoids.
    \begin{enumerate}
     \item If $G$ and $H$ are Morita equivalent, then $\dad{G}=\dad{H}$,
\item $\mathrm{dad}(G)= \max_{U\in \mathcal{U}} \mathrm{dad}(G|_U)$ for any finite open cover $\mathcal{U}$ of $G^0$,
    \item $\dad{G\times H}\leq \dad{G}+\dad{H}$.   
\end{enumerate}
\end{thmx}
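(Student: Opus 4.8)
I would extract one elementary principle and then treat the three items in turn.

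\emph{A common principle.} Restriction to an open subset of the unit space never increases the dynamic asymptotic dimension: if $U\subseteq G^0$ is open then $\dad{G|_U}\le\dad{G}$. Indeed, a relatively compact open $K\subseteq G|_U$ is also relatively compact open in $G$ (its closure is compact, hence contained in the open set $G|_U$ by Hausdorffness), so the definition of $\dad{G}$ yields open $V_0,\dots,V_d\subseteq G^0$ covering $s(K)\cup r(K)$ with each $\langle\{g\in K:s(g),r(g)\in V_i\}\rangle$ relatively compact. Then the sets $V_i\cap U$ cover $s(K)\cup r(K)\subseteq U$, and since any product of elements with source and range in $V_i\cap U$ again has source and range there, the subgroupoid of $G|_U$ generated by $\{g\in K:s(g),r(g)\in V_i\cap U\}$ is contained in the previous one, hence relatively compact. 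This already gives the inequality ``$\ge$'' in (2), and the easy halves of the equalities in (1).

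\emph{Item (1).} I would pass to the linking groupoid $L$ of a groupoid equivalence between $G$ and $H$, which can be taken étale and Hausdorff: $L^0=G^0\sqcup H^0$, $L|_{G^0}\cong G$, $L|_{H^0}\cong H$, and each of $G^0,H^0$ is open in $L^0$ and meets every $L$-orbit. By the principle above it suffices to prove $\dad{L}\le\dad{L|_U}$ for open $U\subseteq L^0$ with $L\cdot U=L^0$. Given a relatively compact open $K\subseteq L$ — which, after adjoining the units over $s(K)\cup r(K)$, we may assume contains those units — compactness of $\overline{s(K)\cup r(K)}$ together with fullness of $U$ yields a relatively compact open $V\subseteq L$ (a finite union of bisections) with $s(K)\cup r(K)\subseteq s(V)$ and $\overline{r(V)}\subseteq U$. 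Then $\widetilde K:=\{vgw^{-1}:g\in K,\ v,w\in V,\ s(v)=r(g),\ s(w)=s(g)\}$ is relatively compact open in $L|_U$; applying $\dad{L|_U}$ to it produces open $W_0,\dots,W_d\subseteq U$, which we pull back to $U_i:=s\bigl(V\cap r^{-1}(W_i)\bigr)$. Using that $K$ contains the relevant units one checks that the $U_i$ cover $s(K)\cup r(K)$, and that any composable word $g_1\cdots g_n$ with all $g_\ell\in\{g\in K:s(g),r(g)\in U_i\}$ telescopes as $v^{-1}(k_1\omega_1k_2\cdots\omega_{n-1}k_n)w$ with $v,w\in V$ and with every $k_\ell$ and every connecting term $\omega_\ell$ in $\widetilde K_i:=\{g\in\widetilde K:s(g),r(g)\in W_i\}$ (the $\omega_\ell$ land in $\widetilde K$ exactly because $K$ contains units); hence this subgroupoid lies in $V^{-1}\langle\widetilde K_i\rangle V$, which is relatively compact since $\langle\widetilde K_i\rangle$ and $V$ are. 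Running this with $U=G^0$ and with $U=H^0$ gives $\dad{G}=\dad{L}=\dad{H}$.

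\emph{Item (2).} It remains to prove ``$\le$''. I would induct on $|\mathcal U|$: writing $\mathcal U=\{U_1,\dots,U_n\}$ and $U':=U_1\cup\dots\cup U_{n-1}$, the sets $U_1,\dots,U_{n-1}$ cover $(G|_{U'})^0$ and $(G|_{U'})|_{U_j}=G|_{U_j}$, so by induction $\dad{G|_{U'}}=\max_{j<n}\dad{G|_{U_j}}$, and the problem reduces to a two-element cover $G^0=A\cup B$, i.e.\ to $\dad{G}\le\max(\dad{G|_A},\dad{G|_B})$. This is the groupoid analogue of the finite union theorem for asymptotic dimension, and I would prove it in the same spirit: for a relatively compact open $K$, first cover the part of $s(K)\cup r(K)$ lying in a slight shrinking of $B$ by means of $\dad{G|_B}$, then enlarge those pieces a little and cover what remains using $\dad{G|_A}$, combining the two families colour by colour; the enlargement is precisely what absorbs the ``bridging'' elements of $K$ (source in $A\setminus B$, range in $B\setminus A$, or vice versa) without destroying relative compactness of the generated subgroupoids.

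\emph{Item (3).} Set $d_G=\dad{G}$, $d_H=\dad{H}$ and follow the blueprint of the proof that $\mathrm{asdim}(X\times Y)\le\mathrm{asdim}X+\mathrm{asdim}Y$. Given a relatively compact open $K\subseteq G\times H$, project to relatively compact open $K_1\subseteq G$ and $K_2\subseteq H$; since $K\subseteq K_1\times K_2$ we may take $K=K_1\times K_2$. Apply $\dad{H}$ to $K_2$ to get open $V_0,\dots,V_{d_H}\subseteq H^0$ covering $s(K_2)\cup r(K_2)$ with $\Delta_j:=\langle\{h\in K_2:s(h),r(h)\in V_j\}\rangle$ relatively compact, and fix $N$ so large that each $\Delta_j$ is contained in the (relatively compact open) set $K_2^{(N)}$ of $N$-fold products of elements of $K_2\cup K_2^{-1}$ and units. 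Then apply $\dad{G}$ \emph{at the correspondingly larger scale}, to $K_1^{(N)}$, obtaining open $U_0,\dots,U_{d_G}\subseteq G^0$ with $\Gamma_i:=\langle\{g\in K_1^{(N)}:s(g),r(g)\in U_i\}\rangle$ relatively compact. Finally recombine the $(d_G+1)+(d_H+1)$ sets $U_i,V_j$ into $d_G+d_H+1$ open subsets of $G^0\times H^0$ — unions of rectangles $U_i\times V_j$ — via the iterated/telescoping recipe of the finite union theorem; they cover $s(K)\cup r(K)$. The crux, and the only place the large scale $N$ enters, is to verify that inside one combined colour no composable word of $G\times H$ joins two distinct rectangles $U_i\times V_j$ and $U_{i'}\times V_{j'}$ — this is the groupoid incarnation of ``$R$-disjointness'', and the reason $G$ must be analysed at the scale $N$ dictated by $H$. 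Granting it, a single combined colour generates a subgroupoid inside some $\Gamma_i\times\Delta_j$, which is relatively compact, so $\dad{G\times H}\le d_G+d_H$. The combinatorial recombination underlying both the two-set case of (2) and step (3) — merging colour families while keeping the generated subgroupoids relatively compact — is the step I expect to be the main obstacle.
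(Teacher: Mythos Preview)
Your treatment of (1) takes a genuinely different route from the paper. The paper works with the \emph{ampliation} (blow-up) description of Morita equivalence: for a surjective local homeomorphism $\psi\colon X\to G^0$ it shows $\dad{G^\psi}=\dad{G}$, using that the canonical projection $G^\psi\to G$ is locally proper for one inequality, and a direct transfer argument for the other. Your linking-groupoid argument is an equally valid alternative and your sketch is essentially complete; the trade-off is that the paper's locally-proper-homomorphism lemma is reusable elsewhere (it simultaneously yields the restriction estimates in your ``common principle''), while your approach avoids introducing that notion.

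For (2) you have the right outline, but note that the paper exploits item (1) to simplify the gluing: it passes to the Morita-equivalent groupoid $G[\mathcal U]$ so that the two open sets become \emph{disjoint} (clopen), and then isolates the combinatorial step in a precise lemma (if $\langle K_0\cap G|_{V_0}\rangle\subseteq K_1$ and $\langle K_1^3\cap G|_{V_1}\rangle\subseteq K_2$ then $\langle K_0\cap G|_{V_0\cup V_1}\rangle\subseteq K_2^5$). Your ``shrink $B$, enlarge, cover the rest by $A$'' description is in the same spirit but stays vague exactly where this lemma does the work.

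Item (3), however, has a genuine gap, and it is the gap you yourself flag. From $U_0,\dots,U_{d_G}$ and $V_0,\dots,V_{d_H}$ one gets $(d_G{+}1)(d_H{+}1)$ rectangles $U_i\times V_j$, and there is no ``telescoping recipe of the finite union theorem'' that merges these into $d_G{+}d_H{+}1$ colours while keeping the generated subgroupoids relatively compact; the scale enlargement $K_1\rightsquigarrow K_1^{(N)}$ you perform on the $G$-side is not by itself enough to make distinct rectangles in a merged colour $K$-unreachable from one another. The paper's mechanism for this is the Kolmogorov--Ostrand trick (following \cite{BDLM08}): one upgrades a $d$-cover to a $(d,k)$-cover, i.e.\ a family $U_0,\dots,U_k$ (with $k=d_G+d_H$) that is a $(k{+}1{-}d)$-fold cover of the unit space, together with a \emph{control function} $D_G^{(k)}$ bounding each $\langle K\cap G|_{U_i}\rangle$. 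The inductive construction of the extra sets $U_{d+1},\dots,U_k$ (Proposition~\ref{prop:ostrand} in the paper) is the real content --- each new set is built from an intricate union over size-$(k{+}1{-}d)$ index sets, and verifying the control bound requires a nontrivial argument. Once both factors have been upgraded to $(d_G,k)$- and $(d_H,k)$-covers, a pigeonhole on multiplicities shows the \emph{diagonal} family $U_0\times V_0,\dots,U_k\times V_k$ already covers $G^0\times H^0$, and the product bound is immediate. Your proposal does not contain this idea (or a substitute for it), so as written (3) is not a proof.
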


Let us remark that it is straightforward to obtain multiplicative estimates for the dynamic asymptotic dimension of a product of groupoids. The main difficulty in proving (3) lies in reducing this crude estimate and obtain an additive estimate.

Results of this kind have a long history: let us mention the classical result \cite{E95} in topological dimension theory, the results of Dranishnikov-Bell for Gromov's asymptotic dimension \cite{BD06} and the results in \cite{BDLM08} for Assuad-Nagata dimension. Our proof is more closely modelled on the beautiful approach presented in \cite{BDLM08}, who in turn attribute some of the underlying ideas to Kolmogorov and Ostrand.  We should mention that Pilgrim in \cite{pilgrim2022,pilgrim2023} independently proved similar results for the transformation group case. However, the most natural context for dynamic asymptotic dimension is the world of groupoids. 

\bigskip

In the second part of this article, we investigate the relation between the dynamic asymptotic dimension of a groupoid $G$ and its asymptotic dimension, when equipped with a canonical coarse structure denoted by $\mathcal{E}_G$.
It was recently shown in \cite{CJMSTD20} that free actions $\Gamma\curvearrowright X$ of discrete groups on zero-dimensional second countable compact Hausdorff spaces satisfy $\mathrm{dad}(\Gamma\ltimes X)\in \{\mathrm{asdim}(\Gamma),\infty\}$. In fact it is believed, that such a result should be true beyond the zero-dimensional setting. Evidence towards this can be found in the results in \cite{ALSS21,pilgrim2023}. The most natural context for the notion of dynamic asymptotic dimension however is the world of \'etale groupoids, and in this generality nothing seems to be known so far. Hence the second part of this article is concerned with initiating the study of this question more generally. Our main contribution is the following result.

\begin{thmx}\label{Thm:dad=sup asdim}
Let $G$ be a $\sigma$-compact principal \'etale groupoid with compact and totally disconnected unit space, and let $\mathcal{E}_G$ be the canonical coarse structure on $G$. Then $$\mathrm{asdim}(G,\mathcal{E}_G)\leq \mathrm{dad}(G).$$
If we further assume $\mathrm{dad}(G)<\infty$ then
$$\dad{G}=\mathrm{asdim}(G,\mathcal{E}_G).$$
\end{thmx}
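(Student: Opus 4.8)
The plan is to prove the two inequalities separately; only the second is hard, and the finiteness hypothesis is used precisely there. First I would set up the following dictionary. Since $G$ is principal, each $s$-fibre $s^{-1}(x)$ is in canonical bijection with the orbit $[x]$, and (in whatever normalisation one uses for $\mathcal{E}_G$) the induced coarse structure on it is the graph metric in which $y,z\in[x]$ are adjacent exactly when the unique arrow between them lies in a fixed relatively compact open $K=K^{-1}\supseteq G^0$; these orbit graphs have uniformly bounded degree because $K$ is covered by finitely many bisections. Two facts I would record at the outset: a subgroupoid $\langle S\rangle$ with $S\subseteq K$ is relatively compact iff its orbits are uniformly bounded (then $\langle S\rangle\subseteq\bigcup_{k<M}(K\cup K^{-1}\cup G^0)^k$, which is relatively compact), and, by total disconnectedness, any finite open cover of $G^0$ refines to a finite clopen partition. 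With this, $\mathrm{dad}(G)\le d$ becomes: for every such $K$ there is a \emph{clopen} map $c\colon G^0\to\{0,\dots,d\}$ whose monochromatic $K$-components inside the orbits are uniformly bounded; and, by the standard reformulation of asymptotic dimension through colourings with uniformly bounded monochromatic components, $\mathrm{asdim}(G,\mathcal{E}_G)\le d$ is the same statement with the word \emph{clopen} deleted.

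For the first inequality I would assume $d:=\mathrm{dad}(G)<\infty$, fix $K=K^{-1}\supseteq G^0$, apply $\mathrm{dad}(G)\le d$ and refine to a clopen partition $U_0,\dots,U_d$ of $G^0$ with $G_i:=\langle\{g\in K:s(g),r(g)\in U_i\}\rangle$ relatively compact, and pick a compact open $Q\supseteq\bigcup_i\overline{G_i}$. For each colour $i$ I would cover $\{g\in G:r(g)\in U_i\}$ by the equivalence classes of the relation ``$s(g)=s(h)$ and $gh^{-1}\in G_i$'' (singletons allowed). These $d+1$ families cover $G$ (each $g$ receives the colour of $r(g)$), are uniformly bounded by $E_Q$, and within each colour are $E_K$-disjoint: if $(g,h)\in E_K$ with $g,h$ in classes of the $i$-th family, then $gh^{-1}\in K$ has source and range in $U_i$, hence $gh^{-1}\in G_i$ and $g\sim h$. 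This yields $\mathrm{asdim}(G,\mathcal{E}_G)\le\mathrm{dad}(G)$, and in particular $\mathrm{asdim}(G,\mathcal{E}_G)<\infty$ whenever $\mathrm{dad}(G)<\infty$.

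For the reverse inequality, with $m:=\mathrm{dad}(G)<\infty$ and $d:=\mathrm{asdim}(G,\mathcal{E}_G)$, the plan is to show $\mathrm{dad}(G)\le d$ by, for a fixed $K$, upgrading a merely set-theoretic $(d+1)$-colouring of $G^0$ (supplied by $\mathrm{asdim}\le d$) to a clopen one, using a clopen $(m+1)$-colouring (supplied by $\mathrm{dad}=m$) as scaffolding. Concretely: use $\mathrm{dad}(G)=m$ to split $G^0$ into clopen ``parts'' whose monochromatic $K$-components — call them clusters — are uniformly bounded; arrange (with some care about the topology of the cluster relation, enlarging $K$ if necessary) that $G^0$ maps continuously onto a compact totally disconnected ``cluster space'' $Z$ with fibres the clusters; observe that the graph with vertices the clusters and edges the $K$-adjacencies is coarsely equivalent to $(G,\mathcal{E}_G)$, hence has asymptotic dimension $\le d$, and use this to recolour the clusters with $d+1$ colours so that equally coloured clusters form uniformly bounded components; finally promote this recolouring to a \emph{continuous} colouring of $Z$, i.e.\ a clopen colouring of $G^0$ that is constant on clusters, which then has uniformly bounded monochromatic $K$-components and witnesses $\mathrm{dad}(G)\le d$. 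Here $\sigma$-compactness would be used to run the promotion along a countable exhaustion of $G$ by compact open sets, refining the clopen partition step by step to become compatible with the a priori only set-theoretic recolouring, while total disconnectedness keeps every refinement clopen; finiteness of $\mathrm{dad}(G)$ is what forbids the number of colours from creeping past $d+1$ during this process. Once the clopen colouring is in hand, the dictionary above gives $\mathrm{dad}(G)\le d$, hence equality.

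The hard part will be exactly this last ``clopenisation''. A colouring witnessing $\mathrm{asdim}(G,\mathcal{E}_G)\le d$ is a purely coarse object: it lives on the $s$-fibres of $G$, which are topologically totally disconnected and coarsely pairwise incomparable, so there is no reason for it to vary continuously over $G^0$. The topology of $G$ couples different fibres only through local bisections, and the core of the argument is to use those couplings — together with $\sigma$-compactness and the clopen scaffolding guaranteed by $\mathrm{dad}(G)<\infty$ (which is exactly why the finiteness hypothesis appears, and why equality is only claimed under it) — to manufacture a topologically compatible colouring without increasing the number of colours. The first inequality, by contrast, is soft.
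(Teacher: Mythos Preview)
Your proof of $\mathrm{asdim}(G,\mathcal{E}_G)\leq\mathrm{dad}(G)$ is correct and is essentially the paper's argument, modulo a harmless swap of the source/range conventions in the definition of the controlled sets.

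For the reverse inequality your ``clopenisation'' step is not a proof but a restatement of the problem. Passing to the cluster space $Z$ (the orbit space of the compact subgroupoid furnished by $\mathrm{dad}(G)<\infty$) and noting that the induced structure on $Z$ has asymptotic dimension at most $d$ buys nothing: promoting an arbitrary $(d{+}1)$-colouring of $Z$ to one that is continuous for the quotient topology is precisely the assertion $\mathrm{dad}\leq\mathrm{asdim}$ for the quotient groupoid over $Z$, which is the same question on a space that is not visibly simpler, with no induction parameter having decreased. Your appeal to $\sigma$-compactness and successive refinement of clopen partitions is too vague to break this circularity, and indeed the paper does not argue along these lines at all.

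The paper's mechanism is local and constructive. On a clopen piece $Y\subseteq G^0$ with $H\coloneqq\langle K\cap G|_Y\rangle$ compact, one takes the asdim decomposition of a \emph{single fibre} $H^x$ into finitely many $L$-bounded, pairwise $K$-separated blocks, thickens each block through compact open bisections so that the same decomposition persists over a clopen neighbourhood of $x$, and then uses compactness of $Y$ to patch finitely many such neighbourhoods. The idea you are missing is the use of a clopen \emph{fundamental domain} $Y_*\subseteq Y$ for the compact principal groupoid $H$: one sets $U_i\coloneqq\{s(g)\mid g\in H,\ r(g)\in Y_*,\ g\in T_i\}$, where $T_i$ is the $i$-th colour class of the thickened decomposition. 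Principality then forces every word $g_1\cdots g_n\in\langle K\cap G|_{U_i}\rangle$ to telescope to an element of a single $L$-bounded block, yielding $(K,L)$-$\mathrm{dad}(G|_Y)\leq d$; the finitely many clopen pieces $Y=X_0,\ldots,X_D$ coming from $\mathrm{dad}(G)\leq D$ are then reassembled via a separate gluing lemma (your Lemma-style union argument). This fundamental-domain trick is what actually turns fibrewise asdim data into a clopen colouring of the unit space, and it does not appear in your outline.
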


\bigskip

\noindent {\bf Acknowledgement:} I am grateful to Anna Duwenig and Rufus Willett for helpful comments on an earlier version of this paper.

\section{Permanence properties of dynamic asymptotic dimension}
Given a groupoid $G$, we will denote its set of units by $G^0$ and let $r,s\colon G\rightarrow G^0$ denote the \emph{range} and \emph{source} map of $G$, respectively. Throughout this text we will only deal with locally compact Hausdorff groupoids that are \'etale, meaning that the range and source maps are local homeomorphisms.
Note, that open subgroupoids of \'etale groupoids are automatically \'etale again.
Given a groupoid $G$ and a subset $A\subseteq G^0$, the \emph{restriction} of $G$ to $A$ is the subgroupoid $G|_A\coloneqq\{g\in G\mid s(g),r(g)\in A\}$ of $G$.
Given an \'etale groupoid $G$ and a subset $K\subseteq G$, we will write $\langle K\rangle$ for the subgroupoid of $G$ generated by $K$. If $K$ is open, then this subgroupoid is automatically open.

With this in mind we can recall the definition of dynamic asymptotic dimension given in \cite[Definition~5.1]{GWY17}.

\begin{defi}
    Let $G$ be an \'etale Hausdorff groupoid and $d\in \mathbb N$. We say that $G$ has \emph{dynamic asymptotic dimension} at most $d$, if for every open relatively compact subset $K\subseteq G$, there exists a cover of $s(K)\cup r(K)$ by $d+1$ open sets $U_0,\ldots, U_d$ such that for each $0\leq i\leq d$, the groupoid
    $$\langle K\cap G|_{U_i}\rangle$$
    is relatively compact in $G$.
\end{defi}

We will first list some elementary results. To make sense of the statement of the following lemma, recall that given a groupoid $G$ with a non-compact unit space, we can form the Alexandrov groupoid by considering $G^+\coloneqq G\cup (G^0)^+$, that is we take the Alexandrov or one-point compactification of the unit space adding no further arrows (confer \cite[Definition~7.7]{KKLRU21} for details). The following Lemma below can be proven in an elementary way just from the definition above (see \cite{CDGaHV22} for details).
\begin{lemma}\label{lem:basicpermanence}
    Let $G$ be an \'etale Hausdorff groupoid. Then the following hold:
    \begin{enumerate}
        \item $\mathrm{dad}(H)\leq \mathrm{dad}(G)$ for all closed \'etale subgroupoids $H\subseteq G$,
        \item $\mathrm{dad}(G|_U)\leq \mathrm{dad}(G)$ for all open subsets $U\subseteq G^0$, and
        \item $\mathrm{dad}(G^+)=\mathrm{dad}(G).$
    \end{enumerate}
\end{lemma}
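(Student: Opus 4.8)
The plan is to reduce all three statements to the defining property of $\dad{G}$ itself: in each case I would take an open relatively compact test set $K$ inside the relevant (sub)groupoid, transport it to an open relatively compact subset of $G$, apply the hypothesis $\dad{G}\le d$ there, and push the resulting cover of the unit space back. The recurring technical nuisance is that relative compactness inside a subgroupoid need not imply relative compactness inside $G$, and the three parts deal with this in slightly different ways; I expect part (3) to require the most care.

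For (1) I would first enlarge $K$ to an open relatively compact subset of $G$. Writing $K=V\cap H$ with $V$ open in $G$, closedness of $H$ gives $\overline{K}^{H}=\overline{K}^{G}$ compact, so local compactness of $G$ yields an open relatively compact $N\subseteq G$ with $\overline{K}^{G}\subseteq N$; then $\tilde K\coloneqq V\cap N$ is open and relatively compact in $G$ and meets $H$ in exactly $K$. Feeding $\tilde K$ to $\dad{G}\le d$ gives open sets $U_0,\dots,U_d$ covering $s(\tilde K)\cup r(\tilde K)$ with each $\lk\tilde K\cap G|_{U_i}\rk$ relatively compact in $G$, and $W_i\coloneqq U_i\cap H^0$ is then the required cover of $s(K)\cup r(K)$: since $H|_{W_i}=H\cap G|_{U_i}$ and $K\subseteq H$ one has $\lk K\cap H|_{W_i}\rk\subseteq\lk\tilde K\cap G|_{U_i}\rk$, and the closure of the former in $H$ equals its closure in $G$ (again by closedness of $H$) and is a closed subset of a compact set. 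For (2), $G|_U$ is an \emph{open} subgroupoid of $G$, so an open relatively compact $K\subseteq G|_U$ is automatically open and relatively compact in $G$; moreover $P\coloneqq s(\overline{K}^{G|_U})\cup r(\overline{K}^{G|_U})$ is a \emph{compact} subset of $U$ containing $s(K)\cup r(K)$. Applying $\dad{G}\le d$ to $K$ and putting $W_i\coloneqq U_i\cap U$ gives a cover of $s(K)\cup r(K)$ inside $U$ with $K\cap(G|_U)|_{W_i}=K\cap G|_{U_i}$; as every element of $\lk K\cap G|_{U_i}\rk$ has source and range in $P$, this subgroupoid lies in $G|_P$, which is closed in $G$ and contained in $G|_U$ (because $P$ is compact and $P\subseteq U$), so its $G$-closure already lies in $G|_U$ and is compact.

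For (3), the inequality $\dad{G}\le\dad{G^+}$ is immediate from (2), since $G=G^+|_{G^0}$ and $G^0$ is open in $(G^0)^+$. For the reverse inequality I would assume $\dad{G}\le d$, take $K\subseteq G^+$ open and relatively compact, and use that $(G^0)^+$ is clopen in $G^+$ to split $K=K_1\sqcup K_u$ with $K_1\coloneqq K\cap(G\setminus G^0)$ and $K_u\coloneqq K\cap(G^0)^+$, both open. Since $G\setminus G^0$ is closed in $G^+$, the point $\infty$ cannot be a limit of points of $K_1$, so $K_1$ is open and relatively compact already in $G$; set $P_1\coloneqq s(K_1)\cup r(K_1)$, with compact closure $\overline{P_1}\subseteq G^0$, and choose an open relatively compact neighbourhood $O$ of $\overline{P_1}$ in $G^0$. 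Apply $\dad{G}\le d$ to the open relatively compact set $K_1\cup O$ to obtain open sets $W_0,\dots,W_d$ covering $P_1\cup O\supseteq\overline{P_1}$ with each $\lk(K_1\cup O)\cap G|_{W_i}\rk$, hence each $\lk K_1\cap G|_{W_i}\rk$, relatively compact in $G$. Now set $U_\infty\coloneqq\{\infty\}\cup(G^0\setminus\overline{P_1})$ — an open neighbourhood of $\infty$ in $(G^0)^+$ — and take $\tilde U_0\coloneqq W_0\cup U_\infty$ and $\tilde U_i\coloneqq W_i$ for $1\le i\le d$. These cover $s(K)\cup r(K)=P_1\cup K_u$, because $\overline{P_1}\subseteq\bigcup_iW_i$ and the part of $K_u$ not in $\overline{P_1}$ lies in $U_\infty$. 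Crucially, no element of $K_1$ has source or range in $U_\infty$ (those endpoints lie in $\overline{P_1}$), so $K\cap G^+|_{\tilde U_i}=(K_1\cap G|_{W_i})\cup(K_u\cap\tilde U_i)$; and since adjoining a set of units to a generating set only adjoins those units themselves to the subgroupoid generated, $\lk K\cap G^+|_{\tilde U_i}\rk=\lk K_1\cap G|_{W_i}\rk\cup(K_u\cap\tilde U_i)$. The first part is relatively compact in $G$, hence in $G^+$ (a compact subset of $G$ is closed in the Hausdorff space $G^+$), and the second is contained in the compact space $(G^0)^+$; so the union is relatively compact in $G^+$.

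The main obstacle, present in all three parts but most acute in (3), is exactly this mismatch between relative compactness in a subgroupoid and in $G$. In (1) it is handled by closedness of $H$, in (2) by confining the generated subgroupoids to $G|_P$ for a compact $P\subseteq U$, and in (3) by isolating the unit part $K_u$ of the test set — which may escape to $\infty$ and so lies outside the reach of $\dad{G}$ — from its genuine–arrow part $K_1$, which does not; the neighbourhood $U_\infty$ of $\infty$ then absorbs harmlessly into one of the $d+1$ sets supplied by $\dad{G}$ precisely because it contains none of the arrows of $K_1$.
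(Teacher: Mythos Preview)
Your argument is correct in all three parts. The paper itself does not spell out a proof of this lemma: it only remarks that it ``can be proven in an elementary way just from the definition'' and refers to \cite{CDGaHV22} for details, which is precisely the direct-from-the-definition route you have taken. The paper does note, just after Proposition~\ref{prop:locprop}, that items (1) and (2) can alternatively be obtained by observing that the inclusions $H\hookrightarrow G$ (for $H$ closed \'etale) and $G|_U\hookrightarrow G$ (for $U\subseteq G^0$ open) are locally proper homomorphisms, so that Proposition~\ref{prop:locprop} applies; this packages the compactness bookkeeping you carry out by hand into a single statement, at the cost of invoking a slightly more abstract result. Your treatment of (3), separating the genuine arrows $K_1$ from the unit part $K_u$ and absorbing a neighbourhood of $\infty$ into one of the covering sets, is the natural elementary argument and is not further shortened by anything in the paper.

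One minor remark on (2): the detour through $G|_P$ is genuinely needed, since $G|_U$ is only open in $G$ and a priori the $G$-closure of $\langle K\cap G|_{U_i}\rangle$ could leave $G|_U$; you handle this correctly.
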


Here is another useful permanence property that can be proven directly.
\begin{prop}\label{prop:limit}
Let $G$ be an \'etale groupoid such that $G=\bigcup_{n\in\NN} G_n$ for a nested sequence of open subgroupoids $G_n\subseteq G$ satisfying $\overline{G_n}\subseteq G_{n+1}$. Then $\mathrm{dad}(G)\leq \liminf{\mathrm{dad}(G_n)}$.
\end{prop}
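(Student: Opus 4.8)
The plan is to verify the definition of dynamic asymptotic dimension for $G$ directly, exploiting the fact that a compact subset of $G$ — in particular the closure of an open relatively compact set — is already contained in one of the $G_n$, so that a suitable cover can simply be imported from that level.

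Set $d \coloneqq \liminf_n \dad{G_n}$; we may assume $d < \infty$. Fix an open relatively compact subset $K \subseteq G$. Since $\overline{K}$ is compact and $G = \bigcup_n G_n$ is an increasing union of open subsets, there is some $N$ with $\overline{K} \subseteq G_N$; and since $\liminf_n \dad{G_n} = d$, there is some $n \geq N$ with $\dad{G_n} \leq d$ (otherwise $\inf_{m \geq N} \dad{G_m} \geq d+1$, contradicting the value of the $\liminf$). The first routine point is that $K$ is an open, relatively compact subset of $G_n$: it is open because $G_n$ carries the subspace topology and $K \subseteq G_n$, and its closure in $G_n$ equals $\overline{K} \cap G_n = \overline{K}$ (using $\overline{K} \subseteq G_N \subseteq G_n$), which is compact.

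Next I would apply the definition of $\dad{G_n} \leq d$ to this $K$, obtaining open subsets $U_0, \dots, U_d$ of $G_n^0$ (repeating one of the sets if $\dad{G_n} < d$) that cover $s(K) \cup r(K)$ and satisfy that $\langle K \cap G_n|_{U_i} \rangle$ is relatively compact in $G_n$ for each $i$. One then has to transport this data back to $G$. Each $U_i$ is open in $G^0$: since $G$ is \'etale, $G^0$ is open in $G$, hence $G_n^0 = G_n \cap G^0$ is open in $G$, and a fortiori $U_i$ is open in $G^0$. Since $K \subseteq G_n$ and $U_i \subseteq G_n^0$, we have $K \cap G|_{U_i} = K \cap G_n|_{U_i}$, and because $G_n$ is a subgroupoid of $G$ the subgroupoid $\langle K \cap G_n|_{U_i} \rangle$ generated by this set is the same whether formed inside $G_n$ or inside $G$.

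The only step where something genuine happens — and the one I would flag as the (mild) crux — is the passage from relative compactness in $G_n$ to relative compactness in $G$; this is where Hausdorffness is essential, as $G_n$ is open but not closed in $G$. If $S \subseteq G_n$ has compact closure $\overline{S}^{G_n}$ in $G_n$, then $\overline{S}^{G_n}$ is a compact, hence closed, subset of the Hausdorff space $G$; since it contains $S$, the closure of $S$ in $G$ is contained in it and is therefore compact. Applying this with $S = \langle K \cap G_n|_{U_i} \rangle$ shows that $U_0, \dots, U_d$ witness the defining condition for $\dad{G} \leq d$, and since $K$ was arbitrary we conclude $\dad{G} \leq d = \liminf_n \dad{G_n}$.
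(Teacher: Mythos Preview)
Your proof is correct and follows the same overall strategy as the paper: locate some $G_n$ containing $\overline{K}$ with $\dad{G_n}\leq d$, apply the definition of dynamic asymptotic dimension there, and transport the resulting cover back to $G$. The one notable difference is at the closure step. The paper arranges matters so that the generated subgroupoid $H_i$ lies in $G_N$ while the $\mathrm{dad}$ condition is applied in $G_{N+1}$, and then invokes the hypothesis $\overline{G_N}\subseteq G_{N+1}$ to identify the closure of $H_i$ in $G$ with its closure in $G_{N+1}$. You instead argue directly that a compact subset of the Hausdorff space $G$ is closed, so relative compactness in the open subgroupoid $G_n$ already forces relative compactness in $G$. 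This is slightly cleaner and, as a bonus, shows that the nesting hypothesis $\overline{G_n}\subseteq G_{n+1}$ is in fact unnecessary under the paper's standing assumption that all groupoids are Hausdorff.
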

\begin{proof}
We may assume that $d\coloneqq\liminf{\mathrm{dad}(G_n)}<\infty$ since otherwise there is nothing to show.
Let $K\subseteq G$ be an open, relatively compact subset. We may assume without loss of generality that $s(K)\cup r(K)\subseteq K$. Since all the $G_n$ are open, compactness of $\overline{K}$ provides an $N'\in\NN$ such that $K\subseteq \overline{K}\subseteq G_{N'}$. By definition of the $\liminf$ there exists $N\geq N'$ such that $\mathrm{dad}(G_{N+1})\leq d$. Hence there exist open subsets $U_0,\ldots ,U_d\subseteq G^0_{N+1}$ covering $s(K)\cup r(K)$ such that the closure of the subgroupoid $H_i\coloneqq \langle K\cap G|_{U_i}\rangle$ in $G_{N+1}$ is compact. Note that $H_i\subseteq G_N$ and the closure of $H_i$ in $G$ coincides with the closure of $H_i$ in $G_{N+1}$ since $\overline{G_N}\subseteq G_{N+1}$. Hence $\mathrm{dad}(G)\leq d$ as desired.
\end{proof}

\subsection{Morita invariance}
Besides the obvious notion of isomorphism for \'etale groupoids, there are various notions of (Morita) equivalence in the literature and most of them are equivalent to each other.
Let us recall the formulation that will be most useful for our purposes.
Given an \'etale groupoid $G$ and a surjective local homeomorphism $\psi\colon X\to G^0$ from another locally compact Hausdorff space $X$ onto $G^0$, we can define the \emph{ampliation}, or \emph{blow-up} of $G$ with respect to $\psi$ as
$$
    G^\psi=\{(x,g,y)\in X\times G\times X\mid \psi(x)=r(g), \psi(y)=s(g)\}.
$$
It is routine to check that $G^\psi$ with the multiplication $(x,g,y)(y,h,z)=(x,gh,z)$ and inverse map $(x,g,y)^{-1}=(y,g,x)$ is an \'etale groupoid in its own right when equipped with the relative topology from $X\times G\times X$.

We will say that two \'etale groupoids $G$ and $H$ are \emph{equivalent} if they admit isomorphic blow-ups, i.e. if there exist locally compact Hausdorff spaces $X$ and $Y$ together with surjective local homeomorphisms $\psi:X\rightarrow G^0$ and $\phi:Y\rightarrow H^0$ such that $G^\psi\cong H^\varphi$.

We refer the reader to \cite[Section~3]{FKPS19} for a detailed overview of other notions of (Morita) equivalence and in particular \cite[Proposition~3.10]{FKPS19}, where it is proved that many of the most common notions coincide.

To prove that the dynamic asymptotic dimension is invariant under Morita equivalence, we first need another permanence property that is interesting in its own right. Recall, that a continuous homomorphism $\pi:G\rightarrow H$ between two \'etale groupoids is called \emph{locally proper}, if its restriction to $G|_C$ is proper for every compact subset $C\subseteq G^0$.
Examples include the inclusion map of a closed \'etale subgroupoid $H\hookrightarrow G$, the inclusion map $G|_U\hookrightarrow G$ for an open subset $U\subseteq G^0$, or the projection map $G\ltimes X\rightarrow G$, where $G\ltimes X$ is the transformation groupoid associated with a continuous action of an \'etale groupoid $G$ on a locally compact Hausdorff space $X$.
\begin{prop}\label{prop:locprop}
	Let $G$ and $H$ be \'etale groupoids and $\pi:G\rightarrow H$ a continuous and locally proper homomorphism. Then $\mathrm{dad}(G)\leq \mathrm{dad}(H)$.
\end{prop}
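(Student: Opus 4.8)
The plan is to transport the defining data for $\mathrm{dad}(H)$ backwards along $\pi$ and to invoke local properness only at the very last step. We may assume $d\coloneqq\mathrm{dad}(H)<\infty$. Fix an open, relatively compact set $K\subseteq G$. Since $\overline{K}$ is compact and $\pi$ is continuous, $\pi(\overline{K})$ is compact in $H$, so by local compactness we may pick an open, relatively compact set $L\subseteq H$ with $\pi(\overline{K})\subseteq L$; in particular $\pi(K)\subseteq L$. Applying $\mathrm{dad}(H)\le d$ to $L$ yields open sets $V_0,\dots,V_d\subseteq H^0$ covering $s(L)\cup r(L)$ such that each $\langle L\cap H|_{V_i}\rangle$ has compact closure $C_i\subseteq H$. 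Put $U_i\coloneqq(\pi|_{G^0})^{-1}(V_i)\subseteq G^0$, which is open. Since a homomorphism maps units to units and commutes with $s$ and $r$, for $x=s(g)$ with $g\in K$ we have $\pi(x)=s(\pi(g))\in s(L)$, and similarly for ranges, so the $U_i$ cover $s(K)\cup r(K)$.

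It remains to show that each $H_i\coloneqq\langle K\cap G|_{U_i}\rangle$ is relatively compact in $G$. The preimage under $\pi$ of any subgroupoid of $H$ is a subgroupoid of $G$, and it contains $K\cap G|_{U_i}$ as soon as the subgroupoid in question contains $\pi(K\cap G|_{U_i})$; hence $H_i\subseteq\pi^{-1}(\langle\pi(K\cap G|_{U_i})\rangle)$. By the choice of $L$ and $U_i$, and again because $\pi$ commutes with $s,r$, we have $\pi(K\cap G|_{U_i})\subseteq L\cap H|_{V_i}$, so $H_i\subseteq\pi^{-1}(\langle L\cap H|_{V_i}\rangle)\subseteq\pi^{-1}(C_i)$.

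The obstacle is that $\pi$ is only locally proper, so $\pi^{-1}(C_i)$ need not be compact, and this is exactly where the restriction-to-units trick comes in. Every element of $H_i$ is a product of elements of $K\cap G|_{U_i}$ and their inverses, so its range and source lie in $s(K)\cup r(K)$; setting $D\coloneqq\overline{s(K)\cup r(K)}$, which is compact since $s,r$ are continuous and $\overline{K}$ is compact, we obtain $H_i\subseteq G|_D$. Now $G|_D=s^{-1}(D)\cap r^{-1}(D)$ is closed in $G$, hence locally compact Hausdorff, and $\pi|_{G|_D}$ is proper by local properness of $\pi$; therefore $(\pi|_{G|_D})^{-1}(C_i)=G|_D\cap\pi^{-1}(C_i)$ is compact and contains $H_i$. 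Thus $H_i$ is relatively compact in $G$, and since $K$ was arbitrary this proves $\mathrm{dad}(G)\le d$. The only points requiring a little care are the formal facts that images and preimages of (sub)groupoids under a homomorphism are again (sub)groupoids and the verification $H_i\subseteq G|_D$; the single genuine idea is the confinement to $G|_D$, which converts the hypothesis ``locally proper'' into exactly the properness needed to conclude.
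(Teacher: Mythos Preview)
Your proof is correct and is essentially identical to the paper's argument: you pull back the cover of $H^0$ along $\pi$, observe that the generated subgroupoid lands in $\pi^{-1}(\overline{\langle L\cap H|_{V_i}\rangle})\cap G|_D$ for the compact set $D=\overline{s(K)\cup r(K)}$, and then invoke local properness on $G|_D$ to conclude compactness. The only differences are notational (the paper swaps the roles of $U_i$ and $V_i$) and that you spell out a few routine verifications the paper leaves implicit.
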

\begin{proof}
	We may assume, that $\mathrm{dad}(H)=d<\infty$, since otherwise there is nothing to prove. Let $K\subseteq G$ be an open relatively compact subset. Then $\pi(K)$ is a relatively compact subset of $H$. Let $C$ be an open relatively compact subset of $H$ with $\pi(K)\subseteq C$.
	By assumption we may find open subsets $U_0,\ldots, U_d$ of $H^0$ which cover $s(C)\cup r(C)$, such that for each $0\leq i\leq d$ the the subgroupoid $H_i\coloneqq \langle C\cap H|_{U_i}\rangle$ of $H$ is relatively compact. If we let $V_i\coloneqq\pi^{-1}(U_i)\cap G^0$, then $V_i$ is an open subset of $G^0$ by continuity of $\pi$. Since $\pi$ is a homomorphism, we clearly have $s(K)\cup r(K)\subseteq \bigcup_{i=0}^d V_i$. We claim that
	$$\langle K\cap G|_{V_i}\rangle$$ is a relatively compact open subgroupoid of $G$.
	Note that $\langle K\cap G|_{V_i}\rangle\subseteq \pi^{-1}(\overline{H_i})\cap G_{\mid s(\overline{K})\cup r(\overline{K})}$. Since $\pi$ is locally proper, the latter is a compact subgroupoid of $G$. This completes the proof.
\end{proof}

Note that an application of this result to the examples mentioned above gives an alternative way to prove items (1) and (2) in Lemma \ref{lem:basicpermanence} above.
We are now ready to proceed with the proof of the main result of this section:

\begin{prop}\label{Prop:MoritaInvariance}
    The dynamic asymptotic dimension is invariant under (Morita) equivalence.
\end{prop}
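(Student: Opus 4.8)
Since $\mathrm{dad}(\cdot)$ is visibly an isomorphism invariant and two equivalent groupoids have isomorphic blow-ups $G^\psi\cong H^\varphi$, it suffices to prove $\mathrm{dad}(G^\psi)=\mathrm{dad}(G)$ for an arbitrary surjective local homeomorphism $\psi\colon X\to G^0$. The inequality $\mathrm{dad}(G^\psi)\leq \mathrm{dad}(G)$ I would get for free: the projection $\pi\colon G^\psi\to G$, $(x,g,y)\mapsto g$, is a continuous homomorphism, and identifying $(G^\psi)^0$ with $X$ (so that $r(x,g,y)\leftrightarrow x$, $s(x,g,y)\leftrightarrow y$) one checks that for compact $C\subseteq X$ and compact $L\subseteq G$ the set $\pi^{-1}(L)\cap G^\psi|_C$ is a closed subset of $C\times L\times C$, hence compact; thus $\pi$ is locally proper and Proposition~\ref{prop:locprop} gives the claim.

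The substance is the reverse inequality $\mathrm{dad}(G)\leq \mathrm{dad}(G^\psi)=:d$. Given an open relatively compact $K\subseteq G$, the plan is first to exploit that $\psi$ is a surjective local homeomorphism to cover the compact set $\overline{s(K)\cup r(K)}$ by finitely many open sets $W_1,\dots,W_n\subseteq G^0$, each carrying a continuous local section $\sigma_j$ of $\psi$ defined on the (compact) closure $\overline{W_j}$. Using these sections I would assemble a lift
$$\tilde K\coloneqq \bigcup_{j,j'}\{(\sigma_j(r(g)),g,\sigma_{j'}(s(g)))\mid g\in K,\ r(g)\in W_j,\ s(g)\in W_{j'}\}\subseteq G^\psi,$$
the two points to verify being that $\tilde K$ is open (which rests on the elementary fact that the image of a continuous local section of a local homeomorphism is open, so that $\tilde K_{j,j'}=G^\psi\cap(\sigma_j(W_j)\times K\times\sigma_{j'}(W_{j'}))$) and that $\tilde K$ is relatively compact (since each $\sigma_j(\overline{W_j})$ is compact). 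Applying $\mathrm{dad}(G^\psi)=d$ to $\tilde K$ yields open sets $\tilde U_0,\dots,\tilde U_d\subseteq X$ covering $s(\tilde K)\cup r(\tilde K)$ with each $\tilde H_i\coloneqq\langle \tilde K\cap G^\psi|_{\tilde U_i}\rangle$ relatively compact. I would then push these down by setting $U_i\coloneqq \bigcup_j \sigma_j^{-1}(\tilde U_i)\subseteq G^0$, which is open, and check two things: that the $U_i$ cover $s(K)\cup r(K)$ (for $v=r(g)$ with $v\in W_j$, pick $j'$ with $s(g)\in W_{j'}$; then $(\sigma_j(v),g,\sigma_{j'}(s(g)))\in\tilde K$ forces $\sigma_j(v)\in r(\tilde K)\subseteq\bigcup_i\tilde U_i$, hence $v\in U_i$ for some $i$), and — the crucial point — that every $g\in K\cap G|_{U_i}$ has a lift in $\tilde K\cap G^\psi|_{\tilde U_i}$, so that $K\cap G|_{U_i}\subseteq \pi(\tilde K\cap G^\psi|_{\tilde U_i})$. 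Since $\pi$ is a homomorphism this gives $\langle K\cap G|_{U_i}\rangle\subseteq \pi(\tilde H_i)$, which lies in the continuous image of a compact set and is therefore relatively compact in the Hausdorff groupoid $G$; hence $\mathrm{dad}(G)\leq d$.

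I expect the main obstacle to be precisely this last matching step: one must define $U_i$ \emph{through the same finitely many sections} used to build $\tilde K$, rather than via $\psi$ itself, because an arbitrary $\psi$-preimage of a point of $U_i$ need not lie in any of the section-images appearing in $\tilde K$, and then the lift of $g$ would fail to land in $\tilde K$. Threading the sections consistently through both $\tilde K$ and the $U_i$, together with the (easy, but worth isolating) openness-of-section-images lemma used to see that $\tilde K$ is open, is where the real care is required; the rest is bookkeeping with relative compactness and the identities $\langle\pi(S)\rangle=\pi(\langle S\rangle)$.
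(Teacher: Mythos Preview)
Your argument is correct and follows the same overall strategy as the paper: show $\mathrm{dad}(G^\psi)=\mathrm{dad}(G)$, get one inequality from local properness of $\pi$, and for the other lift $K$ to an open relatively compact subset of $G^\psi$, apply the hypothesis upstairs, and push the resulting cover down via $\pi$.

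The paper's execution of the reverse inequality is, however, noticeably simpler and sidesteps precisely the obstacle you flag. Rather than choosing local sections, the paper merely picks any open relatively compact $C\subseteq X$ with $s(K)\cup r(K)\subseteq \psi(C)$ (which exists because $\psi$ is a surjective local homeomorphism and $X$ is locally compact) and sets $L\coloneqq (C\times K\times C)\cap G^\psi$. After obtaining $U_0,\ldots,U_d$ for $L$, the pushdown is simply $V_i\coloneqq\psi(U_i)$, open because $\psi$ is open. For $g\in K$ with $s(g),r(g)\in V_i$ one takes \emph{any} $x,y\in U_i$ (intersected with $C$) lying over $r(g),s(g)$, and then $(x,g,y)\in L\cap G^\psi|_{U_i}$ automatically. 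Thus the ``matching step'' you worry about --- ensuring the lift lands among the finitely many section-images used to build $\tilde K$ --- never arises: since $L$ contains \emph{all} of $(C\times K\times C)\cap G^\psi$, any preimage in $C$ works. In fact your $\tilde K$ equals the paper's $L$ for the particular choice $C=\bigcup_j\sigma_j(W_j)$, and your $U_i$ is $\psi(\tilde U_i\cap C)$; so your proof is a section-decorated instance of the paper's, with the sections contributing bookkeeping but no logical content.
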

\begin{proof}
    Let $\psi\colon X\to G^0$ be a surjective local homeomorphism and let
    $$
    G^\psi=\{(x,g,y)\in X\times G\times X\mid \psi(x)=r(g), \psi(y)=s(g)\}
    $$
    be the ampliation of $G$ with respect to $\psi$.
    We are going to show $\dad{G^\psi}=\dad{G}$.
    
    It is routine to check that the canonical projection $\pi\colon G^\psi\to G$ is locally proper. Hence Proposition \ref{prop:locprop} immediately yields the inequality $\dad{G^\psi}\leq \dad{G}$.
    
    For the reverse inequality suppose $d\coloneqq\dad{G^\psi}<\infty$ and let $K\subseteq G$ be an open relatively compact subset. 
    Using the assumption that $\psi$ is a local homeomorphism, find $C\subseteq X$ open and relatively compact such that $s(K)\cup r(K)\subseteq \psi(C)$.
    Then the set $L\coloneqq (C\times K\times C)\cap G^\psi$ is open and relatively compact in $G^\psi$. Hence we can find $U_0,\ldots, U_d$ covering $s(L)\cup r(L)$ such that $\langle L \cap G^\psi |_{U_i}\rangle$ is open and relatively compact in $G^\psi$.
    Then the sets $V_i\coloneqq\psi(U_i)$ form an open cover of $s(K)\cup r(K)$.
    Indeed, given $u\in s(K)$ for example, there exists a $g\in K$ such that $s(g)=u$. Since $s(K)\cup r(K)\subseteq \psi(C)$ there exist $x,y\in C$ such that $\psi(x)=r(g)$ and $\psi(y)=s(g)=u$. So $(x,g,y)\in L$. In particular, $y\in s(L)\cup r(L)$ so $y\in U_i$ for some $i$. But then $u=s(g)=\psi(y)\in \psi(U_i)=V_i$.  
    Moreover, when $g\in K$ such that $s(g),r(g)\in V_i$, then there exist $x,y\in U_i$ such that $(x,g,y)\in U_i\times K\times U_i\subseteq L$. So, $g\in \pi(\langle L \cap G^\psi |_{U_i}\rangle)$. It follows that $\langle  K\cap G|_{V_i})\subseteq \pi(\langle L \cap G^\psi |_{U_i}\rangle)$ and hence $\langle  K\cap G|_{V_i})$ is relatively compact in $G$. This verifies the inequality $\dad{G}\leq \dad{G^\psi}$ and completes the proof.
\end{proof}
\subsection{A union theorem}

This section is dedicated to the following result:
\begin{satz}\label{Thm:union}
    Let $G$ be an \'etale groupoid and $\mathcal{U}$ a finite open cover. Then
    $$\mathrm{dad}(G)=\max_{U\in \mathcal{U}} \mathrm{dad}(G|_U).$$
\end{satz}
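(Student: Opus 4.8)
Since every $U\in\mathcal U$ is an open subset of $G^0$, Lemma~\ref{lem:basicpermanence}(2) gives $\mathrm{dad}(G|_U)\le\mathrm{dad}(G)$, so $\max_{U\in\mathcal U}\mathrm{dad}(G|_U)\le\mathrm{dad}(G)$ and everything is in the reverse inequality; we may assume $d:=\max_{U\in\mathcal U}\mathrm{dad}(G|_U)<\infty$. The first step is to reduce to a two-element cover by induction on $|\mathcal U|$. If $\mathcal U=\{U_1,\dots,U_n\}$ with $n\ge 3$, put $U:=U_1\cup\dots\cup U_{n-1}$; then $\{U,U_n\}$ is a two-element open cover of $G^0$, while $\{U_1,\dots,U_{n-1}\}$ is an open cover of $U$ with $(G|_U)|_{U_i}=G|_{U_i}$ for $i<n$, so the two-element case for $\{U,U_n\}$ together with the inductive hypothesis for $G|_U$ gives the claim (the case $n=1$ being trivial). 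It thus remains to treat a cover $G^0=V_1\cup V_2$.

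Fix an open relatively compact $K\subseteq G$; replacing $K$ by $K\cup K^{-1}$ we may assume $K=K^{-1}$. Put $Y:=\overline{s(K)\cup r(K)}$, a compact subset of $V_1\cup V_2$ with $K\subseteq G|_Y$, and use a standard shrinking argument to choose open relatively compact $P_1,P_2$ with $Y\subseteq P_1\cup P_2$ and $\overline{P_i}\subseteq V_i$. The goal is to find open sets $W_0,\dots,W_d$ covering $Y$ with each $\langle K\cap G|_{W_j}\rangle$ relatively compact, and I would build $W_j=A_j\sqcup B_j$ with $A_j\subseteq V_1$ enclosing $P_1$, $B_j\subseteq V_2$ enclosing $Y\setminus P_1$, and no arrow of $K$ joining $A_j$ to $B_j$. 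The reason this shape works is the elementary observation that, for disjoint open $A\subseteq V_1$ and $B\subseteq V_2$ with no arrow of $K$ between them, one has $\langle K\cap G|_{A\sqcup B}\rangle=\langle K\cap G|_A\rangle\sqcup\langle K\cap G|_B\rangle$ with the two factors contained in $G|_{V_1}$ and $G|_{V_2}$; hence $\langle K\cap G|_{W_j}\rangle$ is relatively compact as soon as each factor is, and this is precisely what $\mathrm{dad}(G|_{V_i})\le d$ will yield.

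To build the pieces, the plan is to apply $\mathrm{dad}(G|_{V_1})\le d$ to a sufficiently large open relatively compact $L_1\subseteq G|_{V_1}$ --- one containing the units over a neighbourhood of $\overline{P_1}$ together with a high power of $K\cap G|_{V_1}$ --- obtaining open $\tilde A_0,\dots,\tilde A_d\subseteq V_1$ that cover that neighbourhood with each $\langle L_1\cap G|_{\tilde A_j}\rangle$ relatively compact, and symmetrically on the $V_2$-side to obtain $\tilde B_0,\dots,\tilde B_d\subseteq V_2$. One then carves $A_j\subseteq\tilde A_j$ and $B_j\subseteq\tilde B_j$ by deleting from each a collar of width a few $K$-steps around the interface between the $P_1$-region and the $(Y\setminus P_1)$-region, arranged so that the sets $W_j=A_j\sqcup B_j$ still cover $Y$, yet every arrow of $K$ issuing from $A_j$ stays inside the covered $V_1$-region and so cannot reach $B_j$. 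Controlling a high power of $K$ (rather than $K$ itself) is what leaves enough slack for this; the observation of the previous paragraph then makes each $\langle K\cap G|_{W_j}\rangle$ relatively compact, so $\mathrm{dad}(G)\le d$.

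The main obstacle will be exactly this carving step: choosing the exponent of $K$ and the precise collar so that, simultaneously, the eroded sets still cover $Y$, same-coloured $V_1$- and $V_2$-pieces remain $K$-separated, and each piece stays on its own side of the cover. This is the groupoid incarnation of the buffering argument behind the finite union theorem for asymptotic dimension of Bell--Dranishnikov, and the delicate point is the interplay between the topological closures one is forced to remove in order to keep all sets open and the (integer-valued) $K$-neighbourhoods one wants to track, aggravated by the fact that $K$-neighbourhoods of subsets of $V_1$ need not remain in $V_1$ because of arrows straddling $V_1\cap V_2$.
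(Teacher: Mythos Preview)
Your reduction to a two-element cover matches the paper's, but your treatment of the two-set case takes a different route and leaves a genuine gap at precisely the point you flag as the ``main obstacle''.

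The carving step, as you describe it, does not close. To make $A_j$ and $B_j$ $K$-separated you must delete from (say) $\tilde B_j$ the $K$-neighbourhood of $A_j$; the deleted region then has to be covered by some $W_i$. But the families $(\tilde A_j)_j$ and $(\tilde B_j)_j$ are produced by \emph{independent} applications of $\mathrm{dad}$ to $G|_{V_1}$ and $G|_{V_2}$, so there is no alignment between the indices: the collar removed from $\tilde B_j$ need not lie in $A_j$, nor even in $\bigcup_i A_i$ unless the $\tilde A_i$ are first $K$-saturated. Saturation does keep $\langle K\cap G|_{K\tilde A_j}\rangle$ under control (via $\langle K\cap G|_{KU}\rangle\subseteq K\langle K^3\cap G|_U\rangle K$, cf.\ the Claim inside Proposition~\ref{prop:ostrand}), but then the buffer you must remove from $\tilde B_j$ becomes $K^2\tilde A_j$, strictly larger than $K\tilde A_j$, and the uncovered strip between them survives no matter how many times you iterate. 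The difficulty is not bookkeeping: each enlargement of $A_j$ creates a fresh buffer still to be covered, and that buffer can lie entirely in $V_2\setminus V_1$ where $\mathrm{dad}(G|_{V_1})$ gives you nothing.

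The paper avoids separation altogether. After using Morita invariance (passing to the blow-up $G[\mathcal U]$) to assume the two-element cover is clopen, it proceeds \emph{asymmetrically}: apply $\mathrm{dad}(G|_{U_0})\le d$ to $K$ to obtain $V_{0,i}$, set $K_1:=K\cup\bigcup_i\langle K\cap G|_{V_{0,i}}\rangle$, and only then apply $\mathrm{dad}(G|_{U_1})\le d$ to the \emph{larger} set $K_1^3$ to obtain $V_{1,i}$; finally put $V_i:=V_{0,i}\cup V_{1,i}$. The key Lemma~\ref{lem:gluing} shows that any word $g_1\cdots g_m$ in $K\cap G|_{V_i}$ lies in $H_0K_0H_1K_0H_0\subseteq K_2^5$: one locates the first and last indices at which the path enters $V_{1,i}$, and the middle segment is controlled precisely because $K_1^3$, not merely $K$, was fed into the second application of $\mathrm{dad}$. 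So rather than forcing $K$-separation between same-colour pieces, the paper bounds the number of crossings and absorbs each crossing into a single $K$-factor. Incidentally, the Bell--Dranishnikov union theorem you invoke is itself proved by an asymmetric two-stage argument of this flavour, not by symmetric carving; the missing idea in your outline is exactly this hierarchical use of a larger control set on the second piece.
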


The main technical observation needed in the proof is isolated in the following Lemma:
\begin{lemma}\label{lem:gluing}
    Let $G$ be an \'etale groupoid and $V\subseteq G^0$ an open subset. 
    Suppose $V=V_0\cup V_1$ is the union of two open subsets $V_0,V_1\subseteq G^0$ and that $K_0\subseteq K_1\subseteq K_2\subseteq G$ are open, relatively compact, satisfying $K_i=K_i\cup K_i^{-1}\cup s(K_i)\cup r(K_i)$ such that
    $$H_0\coloneqq\langle K_0\cap G|_{V_0}\rangle \subseteq K_1 \text{ and } H_1\coloneqq\langle K_1^3\cap G|_{V_1}\rangle \subseteq K_2.$$
    Then $\langle K_0\cap G|_V\rangle\subseteq K_2^5.$
\end{lemma}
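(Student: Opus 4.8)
The plan is to fix an arbitrary element $g\in\langle K_0\cap G|_V\rangle$ and show directly that $g\in K_2^5$. Since $K_0$ is symmetric and $G|_V$ is a subgroupoid, $K_0\cap G|_V$ is symmetric, so I may write $g=g_1\cdots g_n$ as a product of composable elements $g_j\in K_0\cap G|_V$ (this includes the case that $g$ is a unit, since the units of this subgroupoid already lie in $s(K_0)\cup r(K_0)\subseteq K_0$). Set $u_0\coloneqq r(g)$, $u_n\coloneqq s(g)$ and $u_j\coloneqq s(g_j)=r(g_{j+1})$ for $0<j<n$; then each $u_j$ lies in $V=V_0\cup V_1$, and, being a source or range of an element of $K_0$, also in $K_0$. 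I will use the standing hypothesis $s(K_i)\cup r(K_i)\subseteq K_i$ throughout: it yields the padding inclusions $K_i\subseteq K_i^2\subseteq K_i^3\subseteq\cdots$ and lets me treat empty subproducts (which equal units lying in $K_0$) as harmless.

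If every $u_j$ lies in $V_0$, then every $g_j\in K_0\cap G|_{V_0}$, so $g\in H_0\subseteq K_1\subseteq K_2^5$. Otherwise, let $p$ be the smallest and $q$ the largest index with $u_p\in V_1$ and $u_q\in V_1$ (so $0\le p\le q\le n$), and decompose $g=a\cdot m\cdot c$ with $a\coloneqq g_1\cdots g_p$, $m\coloneqq g_{p+1}\cdots g_q$ and $c\coloneqq g_{q+1}\cdots g_n$. By minimality of $p$ the units $u_0,\dots,u_{p-1}$ all lie in $V_0$, whence $g_1\cdots g_{p-1}\in H_0\subseteq K_1$ and therefore $a\in K_1\cdot K_0\subseteq K_1^2$; by maximality of $q$ the symmetric argument gives $c\in K_1^2$. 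For the middle piece, list the indices $i$ with $p\le i\le q$ and $u_i\in V_1$ as $p=j_0<j_1<\cdots<j_k=q$ and put $m_l\coloneqq g_{j_{l-1}+1}\cdots g_{j_l}$ for $1\le l\le k$, so that $m=m_1\cdots m_k$. Each $m_l$ has range $u_{j_{l-1}}\in V_1$ and source $u_{j_l}\in V_1$, while the endpoints of all of its letters except the first and the last lie among $u_{j_{l-1}+1},\dots,u_{j_l-1}$, which are in $V_0$ by the choice of the $j_i$. Hence the interior subword $g_{j_{l-1}+2}\cdots g_{j_l-1}$ lies in $H_0\subseteq K_1$ (or is a unit in $K_1$, if empty), so $m_l$ is a product of at most three elements of $K_1$, i.e.\ $m_l\in K_1^3$; thus $m_l\in K_1^3\cap G|_{V_1}$. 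Consequently $m=m_1\cdots m_k\in\langle K_1^3\cap G|_{V_1}\rangle=H_1\subseteq K_2$, and altogether $g=a\,m\,c\in K_1^2\cdot K_2\cdot K_1^2\subseteq K_2^5$.

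The step I expect to be the crux — and the one that dictates the shape of the hypotheses — is the two-level grouping: one first compresses each maximal run of consecutive $V_0$-units to a single element of $H_0\subseteq K_1$, but each block running from one $V_1$-unit to the next still carries a ``crossing'' letter at each end, so it is a product of \emph{three} rather than one element of $K_1$, and it is exactly this that the cube $K_1^3$ in the hypothesis on $H_1$ is there to absorb; only after this regrouping do all the relevant pieces have both endpoints in $V_1$, so that $H_1$ may be invoked. The remaining care is purely bookkeeping — the boundary cases $p=0$ or $q=n$ and the various empty subproducts — all handled uniformly by $s(K_i)\cup r(K_i)\subseteq K_i$. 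No topology is used: the assertion is purely algebraic, and the openness and relative compactness of the $K_i$ play no role in this lemma.
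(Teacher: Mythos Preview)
Your argument is correct and follows essentially the same route as the paper: decompose $g$ into a prefix, a middle block bounded by the first and last $V_1$-units, and a suffix, landing in $K_1^2\cdot K_2\cdot K_1^2\subseteq K_2^5$. Your treatment is in fact more complete than the paper's, which merely asserts the key observation ``if $r(g_k),s(g_l)\in V_1$ then $g_k\cdots g_l\in H_1$'' without justification; your regrouping of the middle piece at the $V_1$-units and the $K_0\cdot H_0\cdot K_0\subseteq K_1^3$ compression is precisely the missing explanation, and your unit-index bookkeeping handles the boundary cases more uniformly than the paper's separate case split.
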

\begin{proof}
    Let $g\in \langle K_0\cap G|_V\rangle$. Then we can write $g=g_1\cdots g_m$ where $g_i\in K_0$ and $s(g_i),r(g_i)\in V$ for all $1\leq i\leq m$. 
    We first note that if $1\leq k<l\leq m$ such that $r(g_k),s(g_l)\in V_1$, then $g_k\cdots g_l\in H_1$.

    Armed with this observation we have the following cases:
    \begin{enumerate}
        \item If $r(g_k)\in V_0$ for all $1\leq k\leq m$, then $g_1\cdots g_{m-1}\in H_0$ and so $g\in H_0K_0\subseteq K_2^5$.
        \item Similarly, if $s(g_k)\in V_0$ for all $1\leq k\leq m$, then $g\in K_0 H_0\subseteq K_2^5$.
        \item  In all other cases, there must exist indices $s,t$ such that $g_s$ is the first element such that $r(g_s)\in V_1$ and $g_t$ is the last element such that $s(g_t)\in V_1$. In this case
        \begin{align*}
            g=&(g_1\cdots g_{s-2})g_{s-1}(g_s\cdots g_t)g_{t+1}(g_{t+2}\cdots g_n)\\
            &\in H_0K_0H_1K_0H_0\subseteq K_1^2 K_2 K_1^2\subseteq K_2^5.
        \end{align*}
    \end{enumerate}
\end{proof}

\begin{proof}[Proof of Theorem \ref{Thm:union}.]
    It follows from Lemma \ref{lem:basicpermanence} that $\mathrm{dad}(G|_U)\leq \mathrm{dad}(G)$ for all $U\in \mathcal{U}$. Hence we only need to verify the inequality $\mathrm{dad}(G)\leq \max_{U\in \mathcal{U}} \mathrm{dad}(G|_U)$. 
    Inductively, it will be enough to show this for a cover by two open sets $U_0$ and $U_1$. Moreover, using Morita invariance, we may pass to the groupoid $G[\mathcal{U}]$ to assume that the $U_0$ and $U_1$ are disjoint, i.e. a clopen cover. Given an open, relatively compact subset $K\subseteq G$ the set $K\cap G|_{U_0}$ is open and relatively compact in $G|_{U_0}$. So by assumption, we may find an open cover $V_{0,0},\ldots, V_{0,d}$ of $(s(K)\cup r(K))\cap U_0$ such that $\langle K\cap G|_{V_{0,i}}\rangle$ is relatively compact. Now set $K_1\coloneqq K\cup  \bigcup_{i=0}^d \langle K\cap G|_{V_{0,i}}\rangle$. Then $K_1$ is open and relatively compact in $G$. 
    Using our assumption on $G|_{U_1}$ now, we can find an open cover $V_{1,0},\ldots, V_{1,d}$ of $s(K_1)\cup r(K_1)$ such that $\langle K_1^3\cap G|_{V_{1,i}}\rangle$ is relatively compact for all $0\leq i\leq d$.
    Set $V_i\coloneqq V_{0,i}\cup V_{1,i}$. Then $V_0,\ldots, V_d$ is an open cover of $s(K)\cup r(K)$ and Lemma \ref{lem:gluing} implies that for each $0\leq i\leq d$ the groupoid $\langle K\cap G|_{V_i}\rangle$ is relatively compact.
\end{proof}

Note, that we can also combine Theorem \ref{Thm:union} with Proposition \ref{prop:limit} to obtain an estimate for infinite covers. We will leave the details to the reader.

\subsection{A product theorem}
The goal of this subsection is to establish an estimate for the dynamic asymptotic dimension of the product of two \'etale groupoids.

\begin{satz}\label{Thm:Product}
    Let $G$ and $H$ be \'etale Hausdorff groupoids. Then
    $$\dad{G\times H}\leq \dad{G}+\dad{H}.$$
\end{satz}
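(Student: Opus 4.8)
The plan is to follow the Kolmogorov--Ostrand strategy of \cite{BDLM08}: first upgrade the definition of dynamic asymptotic dimension to a version with extra colours, and then play the extra colours of one factor off against those of the other by a pigeonhole argument. Throughout we may assume $m\coloneqq\dad{G}$ and $n\coloneqq\dad{H}$ are both finite.

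First I would record the naive estimate. Given an open relatively compact $K\subseteq G\times H$, choose open relatively compact $L\subseteq G$ and $M\subseteq H$ with $K\subseteq L\times M$, and apply the definition to $L$ and $M$ to obtain open sets $U_0,\ldots,U_m\subseteq G^0$ covering $s(L)\cup r(L)$ and $W_0,\ldots,W_n\subseteq H^0$ covering $s(M)\cup r(M)$ with all $\langle L\cap G|_{U_i}\rangle$ and $\langle M\cap H|_{W_j}\rangle$ relatively compact. Since $(G\times H)|_{U_i\times W_j}=G|_{U_i}\times H|_{W_j}$ and $\langle A\times B\rangle\subseteq\langle A\rangle\times\langle B\rangle$ for $A\subseteq G$, $B\subseteq H$, the $(m+1)(n+1)$ sets $U_i\times W_j$ witness $\dad{G\times H}\le mn+m+n$. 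The whole point is to compress these $(m+1)(n+1)$ colours down to $m+n+1$.

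The key tool, which I would state as a separate lemma, is the following excess-of-colours refinement of the definition: \emph{if $\dad{G}\le m$, then for every open relatively compact $K\subseteq G$ and every $N\ge m$ there exist open sets $U_0,\ldots,U_N\subseteq G^0$ with each $\langle K\cap G|_{U_j}\rangle$ relatively compact such that any $m+1$ of the $U_j$ already cover $s(K)\cup r(K)$} (equivalently, every point of $s(K)\cup r(K)$ lies in at least $N-m+1$ of the $U_j$). This is the technical heart, and the step I expect to be the main obstacle. It should be proved by induction on the excess $N-m$, the case $N=m$ being the definition itself. In the inductive step one applies $\dad{G}\le m$ at a suitably enlarged scale to obtain a fresh $(m+1)$-cover, and then \emph{merges} it into the current list of $N$ sets while introducing only one new colour, thereby raising the covering multiplicity by one. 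The merging is exactly the type of manipulation carried out in Lemma~\ref{lem:gluing}: one builds an increasing tower $K_0\subseteq K_1\subseteq\cdots$ of symmetric open relatively compact sets so that the subgroupoids generated by restrictions to the unions of old and new pieces stay relatively compact, possibly after passing to $G[\mathcal{U}]$ as in the proof of Theorem~\ref{Thm:union} to make the new cover clopen. The combinatorial bookkeeping that keeps the colour count at $N+1$ while raising the multiplicity is what I would model on \cite{BDLM08}.

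Granting the lemma, I would finish as follows. Given $K\subseteq G\times H$ open relatively compact, pick $L$, $M$ as above and apply the lemma to $L$ in $G$ with $N=m+n$ and to $M$ in $H$ with $N=m+n$. This yields open sets $U_0,\ldots,U_{m+n}\subseteq G^0$ and $W_0,\ldots,W_{m+n}\subseteq H^0$, with all $\langle L\cap G|_{U_j}\rangle$ and $\langle M\cap H|_{W_j}\rangle$ relatively compact, such that every point of $s(L)\cup r(L)$ lies in at least $n+1$ of the $U_j$ and every point of $s(M)\cup r(M)$ lies in at least $m+1$ of the $W_j$. Put $O_j\coloneqq U_j\times W_j$ for $0\le j\le m+n$. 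If $(u,w)\in s(K)\cup r(K)\subseteq(s(L)\cup r(L))\times(s(M)\cup r(M))$, then $\{j:u\in U_j\}$ has at least $n+1$ elements and $\{j:w\in W_j\}$ has at least $m+1$ elements, so these two subsets of the $(m+n+1)$-element index set must intersect; hence $(u,w)\in O_j$ for some $j$, and $O_0,\ldots,O_{m+n}$ is an open cover of $s(K)\cup r(K)$. Finally, since $K\subseteq L\times M$ we have $K\cap(G\times H)|_{O_j}\subseteq(L\cap G|_{U_j})\times(M\cap H|_{W_j})$, so $\langle K\cap(G\times H)|_{O_j}\rangle\subseteq\langle L\cap G|_{U_j}\rangle\times\langle M\cap H|_{W_j}\rangle$ has compact closure. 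Thus $\dad{G\times H}\le m+n$.
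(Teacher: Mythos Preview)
Your overall strategy---prove an Ostrand-type ``excess colours'' lemma and then pigeonhole the diagonal products $U_j\times W_j$---is exactly the paper's approach (the lemma is Proposition~\ref{prop:ostrand}, and the final paragraph is verbatim the paper's endgame). Two points, however, deserve correction.

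First, your sketch of the inductive step for the key lemma is not what the paper does, and the tools you invoke do not fit. Lemma~\ref{lem:gluing} glues covers over a two-piece decomposition of the unit space; it does not raise covering multiplicity, and passing to $G[\mathcal{U}]$ plays no role here. The paper does \emph{not} apply $\dad{G}\le m$ afresh at a larger scale and merge. Instead, given a $(k+1-d)$-fold cover $U_0,\ldots,U_k$ witnessing control of $K^3$, it (i) enlarges each $U_i$ to its $K$-orbit $U_i'=KU_i$ and checks via a conjugation trick that $\langle K\cap G|_{U_i'}\rangle\subseteq K\,\langle K^3\cap G|_{U_i}\rangle\,K$; (ii) shrinks to $V_i$ with $\overline{KV_i}\subseteq U_i'$ while keeping the $(k+1-d)$-fold property (Lemma~\ref{lem:ncovers}); and (iii) defines the single extra set combinatorially as
\[
U'_{k+1}=\bigcup_{|S|=k+1-d}\Bigl(\bigcap_{j\in S}V_j\setminus\bigcup_{i\notin S}\overline{KV_i}\Bigr),
\]
so that a $K$-word in $G|_{U'_{k+1}}$ cannot change its index set $S$ and therefore already lies in some $\langle K\cap G|_{U_j}\rangle$. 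This is the genuine Ostrand step; ``merging a fresh $(m+1)$-cover'' does not produce it.

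Second, the paper runs the Ostrand lemma only for \emph{compact} unit space (one needs $G^0\subseteq K$, both so that $U_i\subseteq KU_i$ and so that the shrinking of $n$-fold covers in Lemma~\ref{lem:ncovers} is available). The general case is then obtained by passing to the Alexandrov groupoids $G^+$, $H^+$ and invoking Lemma~\ref{lem:basicpermanence}. Your write-up omits this reduction; you should either insert it or argue separately that your version of the inductive step survives without $G^0\subseteq K$.
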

Note, that one can find a multiplicative estimate for $\dad{G\times H}$ in a straightforward fashion from the definitions. The proof of the additive formula in Theorem \ref{Thm:Product} however is more involved.

Let $\mathcal{O}_{c}(G)$ denote the set of open relatively compact subsets $K\subseteq G$ such that $K=K\cup K^{-1}\cup s(K)\cup r(K)$. If $G^0$ is compact we will additionally require that $G^0\subseteq K$.
\begin{defi}
Let $G$ be an \'etale groupoid with compact unit space. 
A $d$-dimensional control function for $G$ is a map $D_G:\mathcal{O}_c(G)\to \mathcal{O}_c(G)$ such that for every $K\in \mathcal{O}_c(G)$ there exists an open cover $U_0,\ldots, U_d$ of $G^0$ such that the groupoid $\langle K\cap G|_{U_i}\rangle$ is contained in $D_G(K)$ for all $0\leq i\leq d$.
\end{defi}
Note that a $d$-dimensional control function for $G$ exists if and only if $\mathrm{dad}(G)\leq d$.
The following definition and results are inspired by \cite{BDLM08}. In order to state them let us introduce the following terminology: A collection $\{U_i\mid i\in I\}$ of open subsets of topological space $X$ is called an \emph{$n$-fold open cover} if $\{i\in I\mid x\in U_i\}$ has cardinality at least $n\in \mathbb N$.

\begin{defi}
Let $k\geq d\geq 0$. A $(d,k)$-dimensional control function for $G$ is a map $D_G:\mathcal{O}_c(G)\longrightarrow \mathcal{O}_c(G)$ such that for every $K\in \mathcal{O}_c(G)$ there exists an $(k+1-d)$-fold open cover $U_0,\ldots, U_k$ of $G^0$ such that the groupoid $\langle K\cap G|_{U_i}\rangle$ is contained in $D_G(K)$ for all $0\leq i\leq k$.
\end{defi}
We note that a $(d,d)$-dimensional control function is just a $d$-dimensional control function in the sense of the previous definition.
There is also a version of the dimension control function for continuous homomorphisms:

To prove the main technical proposition in this subsection we need the following facts about $n$-fold covers:
\begin{lemma}\label{lem:ncovers}
    Let $X$ be a compact Hausdorff space.
    \begin{enumerate}
        \item A collection $\{U_0,\ldots, U_d\}$ of open subsets of $X$ is an $n$-fold cover of $X$ if and only if $\{U_i\mid i\in F\}$ is a cover of $X$ for every subset $F\subseteq \{0,\ldots,d\}$ of cardinality $d+2-n$.
        \item If $\{U_0,\ldots, U_d\}$  is an $n$-fold open cover of $X$, then there exist open subsets $V_i\subseteq \overline{V_i}\subseteq U_i$ for all $0\leq i\leq d$ such that $\{V_0,\ldots, V_d\}$ is still an $n$-fold open cover of $X$.
    \end{enumerate}
\end{lemma}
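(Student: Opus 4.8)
The plan is to treat the two items separately: (1) is a purely combinatorial reformulation obtained by passing to complements, and (2) is a shrinking argument that feeds on (1) together with the normality of $X$.

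For (1), fix $x\in X$ and set $m(x)\coloneqq\betrag{\{i\in\{0,\dots,d\}: x\notin U_i\}}$. By definition, $\{U_0,\dots,U_d\}$ is an $n$-fold cover precisely when $m(x)\le d+1-n$ for every $x$. On the other hand, a subfamily $\{U_i: i\in F\}$ with $\betrag{F}=d+2-n$ fails to cover $X$ exactly when some $x$ satisfies $F\subseteq\{i:x\notin U_i\}$, i.e. when $m(x)\ge d+2-n$. Hence ``$\{U_i:i\in F\}$ covers $X$ for every $F$ of cardinality $d+2-n$'' is the negation of ``$m(x)\ge d+2-n$ for some $x$'', which is again $m(x)\le d+1-n$ for all $x$. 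The two conditions coincide, and this step is routine once the complements are written out. (The degenerate cases $n=0$ and $n=d+1$ are both consistent: for $n=0$ there are no subsets $F$ of size $d+2$, and for $n=d+1$ the condition forces every $U_i=X$.)

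For (2), I would shrink the sets one index at a time. Suppose inductively that for some $0\le k\le d$ I have open sets $V_0,\dots,V_{k-1}$ with $\overline{V_j}\subseteq U_j$ such that $\mathcal W_k\coloneqq\{V_0,\dots,V_{k-1},U_k,\dots,U_d\}$ is still an $n$-fold open cover (for $k=0$ this is the hypothesis); write $W_i$ for its $i$-th member. Introduce
$$D_k\coloneqq\bigcup\Bigl\{\,\bigcap_{i\in F'}(X\setminus W_i)\ :\ F'\subseteq\{0,\dots,d\}\setminus\{k\},\ \betrag{F'}=d+1-n\,\Bigr\}.$$
Being a finite union of finite intersections of the closed sets $X\setminus W_i$, the set $D_k$ is closed. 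Moreover, since $\mathcal W_k$ is an $n$-fold cover of a family of $d+1$ sets, every point lies in the complement of at most $d+1-n$ of the $W_i$; so if $x\in D_k$, the witnessing $F'$ (of size exactly $d+1-n$) must equal $\{i:x\notin W_i\}$, and as $k\notin F'$ this gives $x\in W_k=U_k$. Thus $D_k\subseteq U_k$. Since $X$ is compact Hausdorff, hence normal, I can choose an open $V_k$ with $D_k\subseteq V_k\subseteq\overline{V_k}\subseteq U_k$.

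It remains to check that $\mathcal W_{k+1}\coloneqq\{V_0,\dots,V_k,U_{k+1},\dots,U_d\}$ is still an $n$-fold cover, and here I would invoke (1): it suffices that every subfamily indexed by an $F\subseteq\{0,\dots,d\}$ with $\betrag{F}=d+2-n$ covers $X$. If $k\notin F$ the relevant sets are unchanged from $\mathcal W_k$, so this holds by the inductive hypothesis and (1); if $k\in F$ and $x$ avoids all members indexed by $F$, then in particular $x$ avoids the $d+1-n$ sets indexed by $F\setminus\{k\}$, whence $x\in D_k\subseteq V_k$, a contradiction. After $d+1$ steps this produces the desired $n$-fold open cover $\{V_0,\dots,V_d\}$ with $\overline{V_i}\subseteq U_i$ for all $i$. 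The main obstacle is exactly the inductive step of (2): identifying the correct closed set $D_k$ to shrink around and verifying $D_k\subseteq U_k$, which is where the reformulation in (1) is essential; everything else is bookkeeping.
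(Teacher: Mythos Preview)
Your argument for (1) is correct and essentially the same as the paper's, just phrased via the complement count $m(x)$ rather than by a direct pigeonhole argument; nothing more to say there.

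For (2) your proof is correct but proceeds along a genuinely different route. The paper exploits (1) more directly: for each subset $F\subseteq\{0,\dots,d\}$ of cardinality $d+2-n$ it applies the ordinary shrinking lemma to the cover $\{U_i:i\in F\}$ to obtain open sets $V_i^F$ with $\overline{V_i^F}\subseteq U_i$ still covering $X$, and then simply sets $V_i\coloneqq\bigcup_{F\ni i}V_i^F$. Since this is a finite union one gets $\overline{V_i}\subseteq U_i$, and for any fixed $F$ the family $\{V_i:i\in F\}$ contains $\{V_i^F:i\in F\}$ and hence covers, so (1) gives the $n$-fold cover property in one stroke. Your inductive one-index-at-a-time shrinking, with the carefully chosen closed set $D_k$, achieves the same end; it is slightly heavier on bookkeeping but has the mild advantage of invoking normality only in the elementary form ``separate a closed set from the complement of an open set'' rather than the full shrinking lemma for covers. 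Either approach is perfectly acceptable.
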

\begin{proof}
    \begin{enumerate}
        \item For the forward direction, let $\{U_0,\ldots, U_d\}$ be an $n$-fold open cover of $X$, and let $F\subseteq \{0,\ldots,d\}$ be a subset of cardinality $d+2-n$. Then $F^c$ has cardinality $d+1-(d+2-n)=n-1$. Since every $x\in X$ is contained in at least $n$ of the sets  $\{U_0,\ldots, U_d\}$, there must exist an $i\in F$ such that $x\in U_i$. Hence $\{U_i\mid i\in F\}$ is a cover of $X$.
        Conversely, assume for contradiction that there exists an $x\in X$ contained in at most $n-1$ members of the cover. Then $F=\{i\mid x\in U_i\}$ has cardinality at most $n-1$ and it follows that $F^c$ has cardinality at least $d+1-(n-1)=d+2-n$. By assumption $x$ must then be contained in $U_i$ for some $i\in F^c$ which contradicts our choice of $F$.
        \item Using the first part of this Lemma, $\{U_i\mid i\in F\}$ is an open cover for each $F\subseteq \{0,\ldots,d\}$ of cardinality $d+2-n$. Since $X$ is compact, it is normal. Hence for each fixed $F$, there exist open subsets  $V_i^F\subseteq X$ such that $\overline{V_i^F}\subseteq U_i$ and such that $\{V_i^F\mid i\in F\}$ still covers $X$. Let $V_i\coloneqq\bigcup_{F\ni i}V_i^F$. Then $V_i$ is open, $\overline{V_i}\subseteq U_i$. Moreover, for any finite subset $F$ of cardinality $d+2-n$ the collection $\{V_i\mid i\in F\}$ covers $X$ since each $V_i$ contains the set $V_i^F$ which already form a cover of $X$. Another application of item (1) concludes the proof. 
    \end{enumerate}
\end{proof}

The following proposition shows how to obtain $(d,k)$-dimensional control functions for every $k\geq d$ starting from a $(d,d)$-dimensional control function. This is the main technical ingredient needed to prove the main results.

\begin{prop}\label{prop:ostrand} Let $G$ be an  \'etale groupoid with compact unit space.
If $G$ admits a $d$-dimensional control function $D_G$, set $D_G^{(d)}\coloneqq D_G$ and inductively define functions $D_G^{(k)}$ for $k\geq d$ by $D_G^{(k+1)}(K)=KD_G^{(k)}(K^3)K$. Then $D_G^{(k)}$ is a $(d,k)$-dimensional dimension function for all $k\geq d$.
\end{prop}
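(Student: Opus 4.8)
The plan is to argue by induction on $k\geq d$; the case $k=d$ is the remark made just above, that a $(d,d)$-dimensional control function is exactly a $d$-dimensional control function, so $D_G^{(d)}=D_G$ works. For the inductive step, assume $D_G^{(k)}$ is a $(d,k)$-dimensional control function and fix $K\in\mathcal{O}_c(G)$. Since $K^3\in\mathcal{O}_c(G)$, feeding $K^3$ to $D_G^{(k)}$ produces an $(k+1-d)$-fold open cover $W_0,\ldots,W_k$ of $G^0$ with $\langle K^3\cap G|_{W_j}\rangle\subseteq D_G^{(k)}(K^3)$ for every $j$; write $M\coloneqq D_G^{(k)}(K^3)$. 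What has to be done is to refine and rearrange these $k+1$ sets into $k+2$ open sets $U_0,\ldots,U_{k+1}$ forming an $(k+2-d)$-fold cover of $G^0$ with $\langle K\cap G|_{U_i}\rangle\subseteq KMK=D_G^{(k+1)}(K)$ for all $i$.

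The role of the cube in the recursion is captured by an elementary observation that I would record first. For $A\subseteq G^0$ write $K\cdot A\coloneqq A\cup\{r(h)\mid h\in K,\ s(h)\in A\}$ for the one-step $K$-thickening of $A$; this is open (as $K$ is open and $r$ is an open map), and $A\subseteq K\cdot A$ since $G^0\subseteq K$. If an open set $U$ satisfies $U\subseteq K\cdot W_j$ for some $j$, then $\langle K\cap G|_U\rangle\subseteq K\langle K^3\cap G|_{W_j}\rangle K\subseteq KMK$. Indeed, an element $g_1\cdots g_m$ with $g_\ell\in K$ and $s(g_\ell),r(g_\ell)\in U$ can be rewritten as $b_1^{-1}(b_1g_1a_1)(b_2a_1)^{-1}(b_2g_2a_2)(b_3a_2)^{-1}\cdots(b_mg_ma_m)a_m^{-1}$, where, using $s(g_\ell)=r(g_{\ell+1})$, one picks $a_\ell,b_\ell\in K\cup G^0$ pushing the two endpoints of $g_\ell$ into $W_j$, so that every bracketed factor lies in $K^3\cap G|_{W_j}$ while $b_1^{-1},a_m^{-1}\in K$. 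For sets $U$ that a priori only sit inside a $K$-thickening of a union of several of the $W_j$, one combines the corresponding controls by iterating Lemma~\ref{lem:gluing}, whose chain $K_0\subseteq K_1\subseteq K_2$ and hypothesis $\langle K^3\cap G|_{W_j}\rangle\subseteq M$ are tailored precisely to this situation.

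It remains to carry out the combinatorial boosting, and this is the delicate point. Following the Kolmogorov--Ostrand argument in the form of \cite{BDLM08}, I would first shrink $W_0,\ldots,W_k$ via Lemma~\ref{lem:ncovers}(2) to an $(k+1-d)$-fold cover whose members have closures inside the original ones, creating room, and then assemble $k+2$ sets $U_i$ as a staircase-type combination of the shrunk sets and their one-step $K$-thickenings, arranged so that a point lying in exactly $m\geq k+1-d$ of the $W_j$ lands in at least $m+1$ of the $U_i$; that the $U_i$ then form an $(k+2-d)$-fold cover is checked through Lemma~\ref{lem:ncovers}(1), i.e.\ by verifying that every $(d+1)$-element subfamily covers $G^0$. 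The obstacle is compatibility: the multiplicity bookkeeping alone would be easy (even throwing in a copy of $G^0$ bumps it), but every $U_i$ must at the same time be small enough that $\langle K\cap G|_{U_i}\rangle\subseteq KMK$, and since we only control $\langle K^3\cap G|_{W_j}\rangle$ — not $\langle L\cap G|_{W_j}\rangle$ for larger generating sets $L$ — the staircase must be designed so that each $U_i$ is analyzable purely through the rewriting of the previous paragraph together with finitely many applications of Lemma~\ref{lem:gluing}, the resulting enlargement being exactly what is accounted for by the single extra power of $K$ on each side of $M$ in the definition of $D_G^{(k+1)}$. Getting one construction that simultaneously raises the multiplicity by one and keeps all $k+2$ subgroupoids inside $KMK$ is the crux; the remaining ingredients — relative compactness of $K^3$ and its membership in $\mathcal{O}_c(G)$, the symmetry-and-units bookkeeping, and the two auxiliary lemmas — are routine.
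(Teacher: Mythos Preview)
Your setup, induction, and rewriting observation are exactly what the paper does: the first $k+1$ sets are $U_i'\coloneqq K\cdot W_i$ (the $K$-thickenings of the $W_i$), and your conjugation argument showing $\langle K\cap G|_{U_i'}\rangle\subseteq KMK$ is the paper's Claim verbatim.

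The gap is the construction of the $(k+2)$-th set. Your plan to handle sets lying in a $K$-thickening of a \emph{union} of several $W_j$ by iterating Lemma~\ref{lem:gluing} does not work, for precisely the reason you flag: that lemma needs control over $\langle K_1^3\cap G|_{V_1}\rangle$ with $K_1\supseteq KMK$, whereas the inductive hypothesis only bounds $\langle K^3\cap G|_{W_j}\rangle$. You identify the obstacle but do not resolve it; the ``staircase-type combination'' remains a placeholder, and no construction is actually given that simultaneously raises the multiplicity and stays inside $KMK$.

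The paper's resolution is \emph{not} to glue at all. After shrinking to $V_i\subseteq\overline{V_i}\subseteq W_i$ with $\overline{KV_i}\subseteq U_i'$ (still a $(k+1-d)$-fold cover, via Lemma~\ref{lem:ncovers}(2)), one sets
\[
U_{k+1}'\coloneqq\bigcup_{|S|=k+1-d}\Bigl(\bigcap_{j\in S}V_j\ \setminus\ \bigcup_{i\notin S}\overline{KV_i}\Bigr).
\]
The point of subtracting the closed $K$-neighbourhoods of the $V_i$ with $i\notin S$ is that a single $K$-step cannot move from a point with index set $S$ to one with a different index set $S'$: if $s(g)\in V_i$ for some $i\in S\setminus S'$ then $r(g)\in KV_i\subseteq\overline{KV_i}$, contradicting membership in the $S'$-piece. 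Hence any word $g_n\cdots g_1$ in $\langle K\cap G|_{U_{k+1}'}\rangle$ has all its endpoints in a \emph{single} $W_j$, so $g\in\langle K\cap G|_{W_j}\rangle\subseteq D_G^{(k)}(K^3)\subseteq KMK$ directly---no gluing, no iteration, and the single extra $K$ on each side suffices. The $(k+2-d)$-fold covering property is then checked by noting that a point covered only $k+1-d$ times among $U_0',\ldots,U_k'$ is automatically in $U_{k+1}'$. This explicit ``gap'' construction is the missing idea.
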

\begin{proof}
We proceed by induction on $k\geq d$. Since $G$ admits a $d$-dimensional control function by assumption, the base case $k=d$ is obvious.
Suppose now that $D_G^{(k)}$ is a $(d,k)$-dimensional dimension function and let $K\in \mathcal{O}_c(G)$ be an open relatively compact subset.
Then $K^3\in \mathcal{O}_c(G)$ as well. Hence the induction hypothesis provides a $(k+1-d)$-fold open cover $U_0,\ldots, U_k$ of $G^0$ such that $\langle K^3 \cap G|_{U_i}\rangle\subseteq D_G^{(k)}(K^3)$ for all $0\leq i\leq k$.
Let $U_i'\coloneqq KU_i$ be the $K$-orbit of $U_i$. Note that $U_i\subseteq U_i'$ since $G^0\subseteq K$.

\noindent\textbf{Claim:} $\langle K\cap G|_{U'_i}\rangle\subseteq KD_G^{(k)}(K^3)K=D_G^{(k+1)}(K)$.

\begin{proof}[Proof of Claim.]
    Let $g\in \langle K \cap G|_{U_i'}\rangle$. This means that $g=g_n\cdots g_1$ for $g_1,\ldots, g_n\in K$ such that $s(g_j),r(g_j)\in U_i'=KU_i$. It follows that for each $k$, there exists a $h_j\in K$ such that $s(h_j)=s(g_j)$ and $r(h_j)\in U_i$. If we set $g_j'\coloneqq h_{j+1}g_jh_j^{-1}\in K^3$. Note that $s(g_j'),r(g_j')\in U_i$ and hence it follows that
    $$g=h_{n+1}^{-1}g_n'\cdots g_1'h_1\in K \langle K^3 \cap G|_{U_i}\rangle K\subseteq K D^{(k)}_G(K^3) K=D_G^{(k+1)}(K).$$
\end{proof}

Let $V_i\subseteq \overline{V_i}\subseteq U_i$ be such that
\begin{enumerate}
    \item $V_0,\ldots, V_k$ is still a $(k+1-d)$-fold cover, and
    \item $\overline{KV_i}\subseteq KU_i=U_i'$.
\end{enumerate}
For a $1$-cover this can be found using \cite[Lemma~7.4]{GWY17}. In the general case we can also follow the proof of this result, but use Lemma \ref{lem:ncovers} above, when shrinking covers.

The additional open set needed at stage $k+1$ will be the set
$$U_{k+1}'\coloneqq\bigcup_S \left(\bigcap_{j\in S} V_j\setminus \bigcup_{i\notin S} \overline{KV_i}\right)$$ where $S$ runs through the subsets of $\{0,\ldots, k\}$ of cardinality $k+1-d$.
It is clear that $U_{k+1}'$ is open.

We claim that $\langle K\cap G|_{U_{k+1}'}\rangle\subseteq KD_G^{(k)}(K^3)K$. Suppose that $g=g_n\cdots g_1$ with $g_l\in  K$ and $s(g_l),r(g_l)\in U_{k+1}'$ for all $1\leq l\leq n$. Then there exist subsets $S_1,\ldots, S_{n+1} \subseteq \{0,\ldots,k\}$ of cardinality $k+1-d$ such that $s(g_l)\in \bigcap_{j\in S_l} V_j\setminus \bigcup_{i\notin S_l} \overline{KV_i}$ and $r(g_n)\in \bigcap_{j\in S_{n+1}} V_j\setminus \bigcup_{i\notin S_{n+1}} \overline{KV_i}$.
Observe, that $S_1=\ldots =S_{n+1}$. Indeed, suppose for contradiction that there is some index $1\leq l\leq n$ such that $S_{l+1}\neq S_l$. Then we may assume without loss of generality that there exists an $i\in S_l\setminus S_{l+1}$. Since $i\in S_l$ we have $s(g_l)\in V_i$. But then $s(g_{l+1})=r(g_l)=g_ls(g_l)\in KV_i\subseteq \overline{KV_i}$ for $i\in S_{l+1}^c$, a contradiction. 

Since $S_{l+1}=S_l$ for all $1\leq l\leq n$, $s(g_l),r(g_l)\in V_j\subseteq U_j$ for all $j\in S_{l+1}=S_l$ and hence $g_l\in \langle K\cap G|_{U_j}\rangle$. But then $g\in \langle K\cap G|_{U_j}\rangle\subseteq D_G^{(k)}(K^3)\subseteq KD_G^{(k)}(K^3) K$.

Finally, we claim $U_0',\ldots, U_{k+1}'$ is a $((k+1)+1-d)$-fold open cover of $G^0$. We know that $V_0,\ldots, V_{k}$ is a $(k+1-d)$-fold by the induction hypothesis. Fix $x\in G^0$. If it belongs to $k+2-d$ among the sets $U_0',\ldots, U_{k}'$ we are done. So let us assume that it belongs exactly to $k+1-d$ of the sets $U_0',\ldots, U_{k}'$. To complete the proof 
we will show that $x\in U_{k+1}'$. To see this note that the assumption together with the fact that $V_i\subseteq U_i'$ implies that $S=\{i\leq k\mid x\in V_i\}$ has cardinality $k+1-d$. Moreover, $x\in \bigcap_{i\in S}V_i$ and our hypothesis implies $x\notin \bigcup_{i\in \{0,\ldots,k\}\setminus S}\overline{KV_i}$, which together exactly means that $x\in U_{k+1}'$.
\end{proof}

\begin{proof}[Proof of Theorem \ref{Thm:Product}]
We will first prove the result in the case that $G^0$ and $H^0$ are compact. 
We may assume that $\mathrm{dad}(G)$ and $\mathrm{dad}(H)$ are both finite. Set $k\coloneqq\mathrm{dad}(G)+\mathrm{dad}(H)$. By Proposition \ref{prop:ostrand} we may find a $(\mathrm{dad}(G),k)-$dimensional dimension function $D_G$ for $G$ and a $(\mathrm{dad}(H),k)$-dimensional dimension function $D_H$ for $H$. Now let $C\subseteq G\times H$ be open and relatively compact. Since increasing $C$ only makes the problem harder, we may assume that $C=K\times L$ for open, relatively compact subsets $G^0\subseteq K\subseteq G$ and $H^0\subseteq L\subseteq H$. 
Now find a $(k+1-\mathrm{dad}(G))$-fold open cover of $G^0$ such that $\langle K\cap G|_{U_i}\rangle\subseteq D_G(K)$ for all $0\leq i\leq k$ and a $(k+1-\mathrm{dad}(H))$-fold open cover $V_0,\ldots, V_k$ of $H^0$ such that $\langle L\cap H|_{U_i}\rangle\subseteq D_H(L)$.
We claim that the sets $U_0\times V_0,\ldots, U_k\times V_k$ form an open cover of $G^0\times H^0$. Indeed, let $(x,y)\in G^0\times H^0$.
By our choices of the cover $(U_i)_i$ above, the set
$\{i\mid x\in U_i\}$ has cardinality at least $\mathrm{dad}^{+1}(H)$ and similarly, the set $\{i\mid y\in V_i\}$ has cardinality at least $\mathrm{dad}^{+1}(G)$. Since both of them are subsets of $\{0,\ldots, k\}$, their intersection cannot be empty, which proves our claim.
To complete the proof note that
$$\langle C\cap (G\times H)|_{U_i\times V_i}\rangle \subseteq \langle K\cap G|_{U_i}\rangle \times \langle L\cap H|_{V_i}\rangle\subseteq D_G(K)\times D_H(L).$$

Finally, consider the case that $G^0$ and $H^0$ are merely locally compact. Note that $G^0$ and $H^0$ are open in their respective one-point compactifications and $(G^+\times H^+)|_{G^0\times H^0}=G\times H$. Hence Lemma \ref{lem:basicpermanence} allows us to compute
$$\mathrm{dad}(G\times H)\leq \mathrm{dad}(G^+\times H^+)\leq \mathrm{dad}(G^+)+\mathrm{dad}(H^+)=\mathrm{dad}(G)+\mathrm{dad}(H).$$
\end{proof}
In \cite[Proposition~2.3]{WZ10} the authors prove a multiplicative formula for the nuclear dimension of tensor products of $\mathrm{C}^*$-algebras.
Combining Theorem \ref{Thm:Product} with the main results in \cite{GWY17,CDGaHV22} yields improved estimates for the nuclear dimension of tensor products of groupoid $\mathrm{C}^*$-algebras:
\begin{kor}
    Let $(G_1,\Sigma_1)$ and $(G_2,\Sigma_2)$ be two twisted \'etale groupoids. Then
    \begin{align*}
        \dim^{+1}_{nuc}(C_r^*(G_1;\Sigma_1)&\otimes C_r^*(G_2;\Sigma_2))\leq \\
        &(\mathrm{dad}(G_1)+\mathrm{dad}(G_2)+1)(\dim(G_1^0)+\dim(G_2^0)+1).
    \end{align*}
\end{kor}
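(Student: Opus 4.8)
The plan is to combine the additive product estimate from Theorem~\ref{Thm:Product} with the nuclear dimension bounds for twisted groupoid $\mathrm{C}^*$-algebras, using the standard fact that the reduced $\mathrm{C}^*$-algebra of a twisted product groupoid decomposes as a tensor product. First I would recall that given twisted \'etale groupoids $(G_1,\Sigma_1)$ and $(G_2,\Sigma_2)$, the pair $(G_1\times G_2,\Sigma_1\times \Sigma_2)$ is again a twisted \'etale groupoid, and that there is a canonical isomorphism
$$C_r^*(G_1\times G_2;\Sigma_1\times\Sigma_2)\cong C_r^*(G_1;\Sigma_1)\otimes C_r^*(G_2;\Sigma_2),$$
where the tensor product may be taken to be the minimal one (this is the twisted analogue of the well-known untwisted statement, and follows from compatibility of the reduced norm with the spatial tensor product on the regular representations). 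This reduces the problem to bounding the nuclear dimension of $C_r^*(G_1\times G_2;\Sigma_1\times\Sigma_2)$.

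Next I would invoke the main result of \cite{CDGaHV22} (which in the untwisted case recovers \cite{GWY17}): for a twisted \'etale groupoid $(G,\Sigma)$ with $\mathrm{dad}(G)<\infty$ and $\dim(G^0)<\infty$, one has
$$\dim_{nuc}(C_r^*(G;\Sigma))+1\leq (\mathrm{dad}(G)+1)(\dim(G^0)+1).$$
Applying this to $G=G_1\times G_2$, the unit space is $G_1^0\times G_2^0$, whose covering dimension satisfies $\dim(G_1^0\times G_2^0)\leq \dim(G_1^0)+\dim(G_2^0)$ (a classical fact for, say, metrizable or more generally well-behaved spaces; in the generality needed here one should note that unit spaces of \'etale groupoids under consideration are locally compact Hausdorff, and cite the appropriate product theorem for covering dimension). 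Combined with Theorem~\ref{Thm:Product}, which gives $\mathrm{dad}(G_1\times G_2)\leq \mathrm{dad}(G_1)+\mathrm{dad}(G_2)$, we obtain
$$\dim_{nuc}^{+1}(C_r^*(G_1\times G_2;\Sigma_1\times\Sigma_2))\leq (\mathrm{dad}(G_1)+\mathrm{dad}(G_2)+1)(\dim(G_1^0)+\dim(G_2^0)+1),$$
which together with the tensor product identification is exactly the claimed inequality. If either $\mathrm{dad}(G_i)$ or $\dim(G_i^0)$ is infinite the right-hand side is infinite and there is nothing to prove.

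I expect the main obstacle to be the bookkeeping around the twisted product groupoid: one needs to check carefully that the central extension $\Sigma_1\times\Sigma_2$ (suitably interpreted, e.g.\ as a quotient of $\Sigma_1\times\Sigma_2$ by the anti-diagonal circle, so as to retain a single circle fibre) is indeed a twist over $G_1\times G_2$ in the sense of \cite{CDGaHV22}, and that the reduced twisted $\mathrm{C}^*$-algebra of this twist is the spatial tensor product of the factors. This is essentially routine but requires unwinding the relevant definitions; the asymptotic-dimension and covering-dimension inputs are then immediate from Theorem~\ref{Thm:Product} and the classical product theorem for $\dim$. An alternative, if one wishes to avoid the twisted tensor product computation altogether, is to cite directly any version of \cite[Proposition~2.3]{WZ10} that applies to the relevant class of $\mathrm{C}^*$-algebras, combined with the single-groupoid nuclear dimension bounds; but the cleanest route is via the groupoid-level product estimate.
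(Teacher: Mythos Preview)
Your proposal is correct and follows exactly the same route as the paper: identify $C_r^*(G_1;\Sigma_1)\otimes C_r^*(G_2;\Sigma_2)$ with $C_r^*(G_1\times G_2;\Sigma_1\times\Sigma_2)$, then apply the nuclear dimension bound from \cite{CDGaHV22} together with Theorem~\ref{Thm:Product} and the product formula for covering dimension. The paper's proof is simply a one-line version of what you wrote, leaving the covering-dimension estimate and the twist bookkeeping implicit.
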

\begin{proof}
    We only need to note that $C_r^*(G_1,\Sigma_1)\otimes C_r^*(G_2,\Sigma_2)\cong C_r^*(G_1\times G_2,\Sigma_1\times \Sigma_2)$ and apply the results mentioned above.
\end{proof}

\begin{bem}
    It seems reasonable to expect that there is a general Hurewicz type result for the dynamic asymptotic dimension. To be a little more precise: If $\pi\colon G\to H$ is a continuous homomorphism between two \'etale groupoids, we can define the dynamic asymptotic dimension of $\pi$ as
    $$\mathrm{dad}(\pi)\coloneqq \sup \{\mathrm{dad}(\pi^{-1}(L))\mid L\subseteq H \text{ open subgroupoid s.t. } \mathrm{dad}(L)=0\}.$$

    We have the following examples:
    \begin{enumerate}
    \item If $\pi:G\rightarrow H$ is locally proper, then $\mathrm{dad}(\pi)=0$.
    \item If $\pi_G:G\times H\rightarrow G$ is the projection, then $\mathrm{dad}(\pi_G)=\mathrm{dad}(H)$.
    \end{enumerate}

    With these two examples in mind we conjecture that for any continuous groupoid homomorphism $\pi:G\rightarrow H$ we have
    $$\mathrm{dad}(G)\leq \mathrm{dad}(\pi)+\mathrm{dad}(H).$$
\end{bem}

\subsection{Applications to partial actions}
To illustrate our results, let us discuss a class of \'etale groupoids arising from partial actions. 
A \emph{partial action} of a discrete group $\Gamma$ on a locally compact Hausdorff space $X$ is a pair $\theta=((D_\gamma)_{\gamma\in \Gamma}, (\theta_\gamma)_{\gamma\in \Gamma})$ where $D_\gamma\subseteq X$ is an open subset for all $\gamma\in \Gamma$ and $\theta_\gamma\colon D_{\gamma^{-1}}\to D_\gamma$ is a homeomorphism such that $D_1=X$, $\theta_1=\id_X$, and such that $\theta_{\gamma\eta}$ extends $\theta_\gamma \circ \theta_\eta$ (where the latter is defined on $\theta_\eta^{-1}(D_\eta \cap D_{\gamma^{-1}})$).
Associated with such a partial action is the \'etale groupoid $$\Gamma\ltimes_\theta X\coloneqq \{(\gamma,x)\in \Gamma\times X\mid x\in D_{\gamma}\}$$
where the multiplication is defined as $(\gamma,x)(\eta,\theta_\gamma^{-1}x)=(\gamma\eta, x)$ whenever $\theta_\gamma^{-1}(x)\in D_\eta$.

\begin{kor} Let $\Gamma$ be a countable discrete group in the class of groups described in \cite[Theorem~10.7]{CJMSTD20} and let $\theta=((D_\gamma)_{\gamma\in \Gamma}, (\theta_\gamma)_{\gamma\in \Gamma})$ be a free partial action of $\Gamma$ on a zero-dimensional Hausdorff space $X$. If $D_\gamma$ is clopen for all $\gamma\in \Gamma$, then
    $\mathrm{dad}(\Gamma\ltimes_\theta X)\leq \mathrm{asdim}(\Gamma)$.
\end{kor}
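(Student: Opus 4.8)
The plan is to realize $\Gamma \ltimes_\theta X$ as a suitable restriction (or closed subgroupoid) of a transformation groupoid $\Gamma \ltimes Y$ for a genuine (global) action of $\Gamma$, and then combine the permanence results already established with the known estimate for global actions from \cite{CJMSTD20}. The key observation is that a partial action with all domains $D_\gamma$ clopen globalizes: there is a standard construction (the \emph{enveloping action} or \emph{globalization} of Abadie/Exel) producing a locally compact Hausdorff space $Y$ with a global $\Gamma$-action $\Gamma \curvearrowright Y$ and an open $\Gamma$-embedding $X \hookrightarrow Y$ whose orbit is all of $Y$, such that $\Gamma \ltimes_\theta X \cong (\Gamma \ltimes Y)|_X$. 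Because all the $D_\gamma$ are clopen and $X$ is zero-dimensional, $Y$ is again zero-dimensional and Hausdorff (one must check the globalization stays Hausdorff, which is exactly where the clopen hypothesis is used), and freeness of $\theta$ forces freeness of $\Gamma \curvearrowright Y$.

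With that in hand the chain of inequalities is short. First, $X$ is an open subset of the unit space $Y$ of $\Gamma \ltimes Y$, so by Lemma~\ref{lem:basicpermanence}(2) (or, equivalently, by applying Proposition~\ref{prop:locprop} to the locally proper inclusion $(\Gamma\ltimes Y)|_X \hookrightarrow \Gamma\ltimes Y$) we get
$$\mathrm{dad}(\Gamma\ltimes_\theta X) = \mathrm{dad}\bigl((\Gamma\ltimes Y)|_X\bigr) \leq \mathrm{dad}(\Gamma\ltimes Y).$$
Second, $\Gamma \curvearrowright Y$ is a free action of a group in the class of \cite[Theorem~10.7]{CJMSTD20} on a zero-dimensional Hausdorff space, so that theorem (together with $X$ being second countable, hence $Y$ second countable, if that hypothesis is needed there) yields $\mathrm{dad}(\Gamma\ltimes Y) \leq \mathrm{asdim}(\Gamma)$. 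Composing the two inequalities gives the claim.

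The main obstacle is the globalization step: one has to produce $Y$ and verify it is locally compact \emph{Hausdorff} and zero-dimensional. The globalization $Y$ is built as a quotient of $\Gamma \times X$ by the equivalence relation $(\gamma,x)\sim(\eta,y)$ iff $\theta_{\eta^{-1}\gamma}(x) = y$ (suitably interpreted on domains); Hausdorffness of this quotient is \emph{not} automatic for general partial actions, but it does hold when every $D_\gamma$ is clopen — this is precisely the role of the hypothesis, and it is the one point that needs a careful argument rather than a citation. Zero-dimensionality of $Y$ then follows since $Y$ is covered by the clopen copies $\gamma\cdot X$ of the zero-dimensional space $X$. An alternative, perhaps cleaner, route that avoids discussing $Y$ as a topological space directly: observe that $\Gamma\ltimes_\theta X$ is a closed étale subgroupoid of the transformation groupoid of the global action of $\Gamma$ on the clopen subsets of $X$ obtained by the standard "partial-to-global" trick, and invoke Lemma~\ref{lem:basicpermanence}(1); either way the substantive content is the same clopen-domains bookkeeping. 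If one prefers to sidestep globalization entirely, one could instead try to run the \cite{CJMSTD20} argument directly for clopen partial actions, but reusing their theorem via globalization is by far the shorter path.
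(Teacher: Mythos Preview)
Your proof is correct and follows essentially the same route as the paper: globalize the partial action (using the clopen-domain hypothesis to ensure the globalization $Y$ is Hausdorff), observe that $Y$ is zero-dimensional and the global action free, and then invoke \cite[Theorem~10.7]{CJMSTD20}. The only difference is cosmetic: you pass from $\Gamma\ltimes_\theta X\cong(\Gamma\ltimes Y)|_X$ to $\Gamma\ltimes Y$ via the elementary restriction estimate Lemma~\ref{lem:basicpermanence}(2), whereas the paper phrases the same step as a Morita equivalence and invokes Proposition~\ref{Prop:MoritaInvariance}; your version is marginally more economical since only the inequality is needed, while the paper's yields $\mathrm{dad}(\Gamma\ltimes_\theta X)=\mathrm{dad}(\Gamma\ltimes Y)$. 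For zero-dimensionality of $Y$ the paper cites \cite[Proposition~3.2]{G21} rather than arguing directly.
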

\begin{proof}
    The assumptions imply that the partial action admits a Hausdorff globalisation. In other words, there exists a free global action of $\Gamma$ on some locally compact Hausdorff space $Y$ containing $X$ as an open subset and such that $\Gamma\ltimes X$ is Morita equivalent to $\Gamma\ltimes Y$. Note that $\mathrm{dim}(Y)=0$ as well by \cite[Proposition~3.2]{G21}. Hence Proposition \ref{Prop:MoritaInvariance} implies that 
$\mathrm{dad}(\Gamma\ltimes X)=\mathrm{dad}(\Gamma\ltimes Y)$ and  by \cite[Theorem~10.7]{CJMSTD20}, the latter is bounded above by $\mathrm{asdim}(\Gamma)$.
\end{proof}

Of course the collection $D_\gamma$ need not always be closed. Nevertheless, we should expect the same upper bound on the dynamic asymptotic dimension as the following examples shows:

\begin{ex}
    Let $X$ be a zero-dimensional metrisable space and $\theta:U\rightarrow V$ a homeomorphism between two open sets such that $\theta$ generates a free partial action of $\mathbb Z$. Let $$\ZZ\ltimes_\theta X\coloneqq \{(n,x)\mid x\in D_n\}$$
    be the associated transformation groupoid. 
    Using \cite[Corollary~3.6]{G21} there exist partial actions $\theta^{(k)}$ for $k\in \mathbb N$ such that each $\theta^{(k)}$ admits a Hausdorff globalisation, and such that $\mathbb Z\ltimes_\theta X$ can be written as an increasing union
    $$\mathbb Z\ltimes_\theta X=\bigcup_{k\in \mathbb N} \ZZ\ltimes_{\theta^{(k)}} X.$$ 
    Thus, for each $k\in \mathbb N$ there exists a zero-dimensional  Hausdorff space $Y_k$, and a (global) action $\mathbb Z\curvearrowright Y_k$ such that $\mathbb Z\ltimes Y_k$ is Morita equivalent to $\mathrm{Z}\ltimes_{\theta^{(k)}} X$.
    Proposition \ref{Prop:MoritaInvariance} implies $\mathrm{dad}(\ZZ\ltimes_{\theta^{(k)}}X)\leq 1$ for all $k\in \mathbb N$ and hence $\mathrm{dad}(\ZZ\ltimes_\theta X)\leq 1$ as well by Proposition \ref{prop:limit}.
\end{ex}

\section{Asymptotic dimension}
In this second part of the article we will compare the dynamic asymptotic dimension of an \'etale groupoid $G$ with the classical asymptotic dimension of $G$ with respect to a canonical coarse structure on $G$. Coarse structures on \'etale groupoids have been studied before by other authors, see for example \cite[Remark~4.15]{MW20}.

Let us first specify which coarse structure we want to consider on a $\sigma$-compact étale groupoid $G$:
Let $\mathcal{E}_G$ be the collection of subsets of $\{(g,h)\in G\times G\mid r(g)=r(h)\}$, such that $E\in\mathcal{E}_G$ if there exists an open relatively compact subset $K\subseteq G$, such that $E\subseteq \lbrace (g,h)\mid g^{-1}h\in K\rbrace\cup \Delta_G$. Then $\mathcal{E}$ is a coarse structure on $G$.
The elements of $\mathcal{E}_G$ are called \emph{controlled} sets.
Note, that the coarse structure on $G$ also induces a coarse structure $\mathcal{E}_{G^x}$ on each of the range fibres $G^x$ by intersecting each controlled set $E\in \mathcal{E}_G$ with $G^x\times G^x$.

\begin{ex}\label{Ex:trafogrpd}
Let $\Gamma\ltimes X$ be the transformation groupoid for an action of a countable discrete group $\Gamma$ on a locally compact space $X$.
Restricting the canonical coarse structure considered above to any range fibre $(\Gamma\ltimes X)^x$ and identifying it with $\Gamma$ in the canonical way, gives rise to the coarse structure on $\Gamma$ described by Roe in \cite[Example~2.13]{R03}.
\end{ex}
Let us now recall the definition of asymptotic dimension:
If $E$ is a controlled set for a coarse space $(X,\mathcal{E})$, then a family $\mathcal{U}=\lbrace U_i\rbrace_{i\in I}$ of subsets of $X$ is called \emph{$E$-separated} if $(U_i\times U_j)\cap E=\emptyset$ for all $i\neq j$, and \emph{$E$-bounded} if $U_i\times U_i\subseteq E$ for all $i\in I$.

Moreover, $X$ is said to have \emph{asymptotic dimension at most} $d$ if $d$ is the smallest number with the following property: For any controlled set $E$ there exists a controlled set $F$ and a cover $\mathcal{U}$ of $X$ which is $F$-bounded and admits a decomposition
$$\mathcal{U}=\mathcal{U}_0\sqcup\ldots\sqcup \mathcal{U}_d$$
such that each $\mathcal{U}_i$ is $E$-separated.

Since the asymptotic dimension of a subspace is at most the asymptotic dimension of the ambient space, we have the obvious estimate
\begin{equation}\label{Eq:asdim}
    \sup_{x\in G^0} \mathrm{asdim}(G^x,\mathcal{E}_{G^x})\leq \mathrm{asdim}(G,\mathcal{E}_G).
\end{equation}
In the case of a transformation groupoid $\Gamma\ltimes X$ considered in Example \ref{Ex:trafogrpd} all the range fibres canonically identify with the group $\Gamma$ itself and hence the asymptotic dimension of $(\Gamma\ltimes X,\mathcal{E}_{\Gamma\ltimes X})$ coincides with the asymptotic dimension of $\Gamma$ with respect to the canonical coarse structure.

However, the reverse of inequality (\ref{Eq:asdim}) may fail, because even if $\mathrm{asdim}(G^x)<\infty$ for all $x\in G^0$ the sets $F_x$ obtained from the definition of asymptotic dimension that are controlling the size of the members of the cover, may grow in an uncontrollable manner as $x$ varies across $G^0$.
Another example where we do get an equality is the following:

\begin{ex}
	A \textit{graphing} on an \'etale groupoid $G$ is an open relatively compact set $Q\subseteq G\setminus G^0$ with $Q=Q^{-1}$ that generates $G$ in the sense that $G=\bigcup_{n=1}^\infty Q^n$, where we adopt the convention that $Q^0=G^0$. We say that $G$ is \textit{treeable} $G$ admits a graphing such that every $g\in G\setminus G^0$ has a unique (reduced) factorisation $g=g_m\cdots g_1$ with $g_i\in Q$.
	
	Treeable groupoids are in some sense the analogues of free groups in the world of groupoids. Indeed, if $S$ denotes a free generating set for $\mathbb{F}_n$ and we are given an action $\mathbb{F}_n\curvearrowright X$ on a compact space, then $\mathbb{F}_n\ltimes X$ is a treeable groupoid with $Q=S\times X$.
	
	We are now going to show that $\mathrm{asdim}(G,\mathcal{E}_G)=1$ for any treeable groupoid $G$. Let $Q$ be a graphing as in the definition above. This graphing induces a length function $\ell:G\rightarrow [0,\infty)$ given as the length of the unique reduced factorisation of $g$ as a product of elements in $Q$. The function $\ell$ is continuous since $Q$ is open, and controlled and proper since $Q$ is relatively compact. We denote by $B_N\coloneqq \{g\in G\mid \ell(g)\leq N\}=\bigcup_{n=0}^N Q^n$.
	
	Let $K\subseteq G$ be open and relatively compact. Then there exists an $N\in\NN$ such that $K\subseteq B_N$. We consider the ``annuli"
	$$A_k:=\{ g\in G\mid kN\leq \ell(g)\leq (k+1)N\}$$
	If we only consider those annuli indexed by even (resp. odd) numbers $A_{2k}$ (resp. $A_{2k+1}$) then these families are pairwise $K$-disjoint. However, they are not yet uniformly bounded. Hence we need to further subdivide each annulus.
	For $k\geq 2$ we define an equivalence relation on $A_k$ by setting $g\sim_k h$ if $g$ and $h$ have the same past up to distance $N(k-1)$ from the origin, i.e. if $g=g_1\cdots g_m$ and $h=h_1\cdots h_l$ are the unique reduced factorisations of $G$ with elements in $Q$, then $g_i=h_i$ for all $i\leq N(k-1)$. This is clearly an equivalence relation on $A_k$ and we denote the equivalence class of $g\in A_k$ by $[g]_k$.
	Let $h\in [g]_k$. If we factorise $g=g_1\cdots g_{\ell(g)}$ then $h=g_1\cdots g_{N(k-1)}h_{N(k-1)+1}\cdots h_{\ell(h)}$ and hence $d(g,h)=\ell(g^{-1}h)=\ell(g^{-1}_{\ell(g)}\cdots g_{N(k-1)}^{-1}h_{N(k-1)+1}\cdots h_{\ell(h)})\leq \ell(g)+\ell(h)-2N(k-1)-1\leq 2N(k+1) -2N(k-1)=4N$. Hence the diameter of each equivalence class is uniformly bounded by $4N$. Since $A_0$ and $A_1$ are also bounded we can set $\mathcal{U}_0=\bigcup_{k=1}^\infty \{ [g]_{2k}\mid g\in A_{2k}\}\cup\{A_0\}$ and $\mathcal{U}_1=\bigcup_{k=1}^\infty \{ [g]_{2k+1}\mid g\in A_{2k+1}\}\cup\{A_1\}$. Then $\mathcal{U}_0\cup \mathcal{U}_1$ is a uniformly bounded cover of $G$. Moreover, each $\mathcal{U}_i$ is $N$-disjoint. As we have seen, the even and odd annuli are already $N$-disjoint, so we only have to check for $N$-disjointness within each $A_k$ for $k\geq 2$. So fix $k\geq 2$ and let $g,h\in A_k$ be such that they do not agree on the first $N(k-1)$ elements in the unique factorisation. Let $j\in \{1,\ldots ,N(k-1)\}$ be the minimal number such that $g_{j}\neq h_{j}$ in the unique factorisations of $g$ and $h$. Then we compute
	\begin{align*}
	d(g,h)&=\ell(g)+\ell(h)-2(j-1)\\
	&\geq \ell(g)+\ell(h) - 2N(k-1)\\
	&\geq 2kN-2N(k-1)=2N\geq N.
	\end{align*}
	This finishes the proof.
\end{ex}

The following Proposition gives the first half of Theorem \ref{Thm:dad=sup asdim}.
\begin{prop}
Let $G$ be an étale groupoid with compact unit space. Then
$$\mathrm{asdim}(G,\mathcal{E}_G)\leq \mathrm{dad}(G).$$
\end{prop}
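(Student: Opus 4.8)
The plan is to read off a decomposition witnessing $\mathrm{asdim}(G,\mathcal{E}_G)\le d$ directly from a cover witnessing $\mathrm{dad}(G)\le d$, using crucially that $\mathcal{E}_G$ only relates elements lying in a common range fibre. We may assume $d\coloneqq\mathrm{dad}(G)<\infty$. Fix a controlled set $E$. Since $G^0$ is open and relatively compact in $G$, after enlarging $E$ we may assume $E=E_K\coloneqq\{(g,h)\mid g^{-1}h\in K\}$ for some $K\in\mathcal{O}_c(G)$; in particular $G^0\subseteq K$, so $s(K)=r(K)=G^0$. Applying the definition of dynamic asymptotic dimension to $K$ yields an open cover $U_0,\dots,U_d$ of $G^0$ such that each $B_i\coloneqq\langle K\cap G|_{U_i}\rangle$ is open and relatively compact; set $B\coloneqq G^0\cup\bigcup_{i=0}^d B_i\in\mathcal{O}_c(G)$ and $F\coloneqq E_B$, which is a controlled set.

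First I would build the cover. Put $W_i\coloneqq s^{-1}(U_i)$, an open subset of $G$, so that $\bigcup_{i=0}^d W_i=G$. On $W_i$ define $g\sim_i h$ iff $r(g)=r(h)$ and $g^{-1}h\in B_i$; since $B_i$ is a subgroupoid this is an equivalence relation on $W_i$ (reflexivity uses $s(g)\in K\cap G|_{U_i}\subseteq B_i$, symmetry uses $B_i=B_i^{-1}$, transitivity uses $B_iB_i=B_i$), and because $g^{-1}h\in B_i\subseteq G|_{U_i}$ forces $s(g),s(h)\in U_i$, each $\sim_i$-class lies inside a single range fibre. Let $\mathcal{U}_i$ be the set of $\sim_i$-classes in $W_i$ and $\mathcal{U}\coloneqq\bigcup_{i=0}^d\mathcal{U}_i$; every $g\in G$ lies in some $W_i$, hence in its own $\sim_i$-class, so $\mathcal{U}$ covers $G$, and after discarding from each $\mathcal{U}_i$ the classes already occurring in some $\mathcal{U}_j$ with $j<i$ we may regard the decomposition as $\mathcal{U}=\mathcal{U}_0\sqcup\dots\sqcup\mathcal{U}_d$.

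Then I would check the two requirements. For $F$-boundedness: if $h,h'\in[g]_i$ then $g^{-1}h,g^{-1}h'\in B_i$, hence $h^{-1}h'=(g^{-1}h)^{-1}(g^{-1}h')\in B_i\subseteq B$, and $(g,g)$ contributes $s(g)\in G^0\subseteq B$, so $[g]_i\times[g]_i\subseteq E_B=F$. For $E$-separation of $\mathcal{U}_i$: if $(h,h')\in([g]_i\times[g']_i)\cap E$ for two $\sim_i$-classes, then $h^{-1}h'\in K$, and since $E\subseteq E_K$ forces $r(h)=r(h')$ while $h,h'\in W_i$ gives $s(h),s(h')\in U_i$, we get $h^{-1}h'\in K\cap G|_{U_i}\subseteq B_i$, so $h\sim_i h'$ and $[g]_i=[g']_i$. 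This gives $\mathrm{asdim}(G,\mathcal{E}_G)\le d=\mathrm{dad}(G)$.

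I expect no serious obstacle here — this is the ``easy half'' of Theorem \ref{Thm:dad=sup asdim} and amounts to unravelling the two definitions — but the point requiring genuine (if routine) care is the groupoid bookkeeping: checking composability throughout, recognising that the \emph{generated} relatively compact subgroupoid $\langle K\cap G|_{U_i}\rangle$ rather than just $K\cap G|_{U_i}$ is the correct controlled set for $F$ (since $\sim_i$-classes arise from chains of single steps, and an arbitrary element of the generated subgroupoid must be decomposed back into generators to reconstruct such a chain), and assembling the classes over all range fibres simultaneously into one cover that is $F$-bounded for a single fixed $F$ while being $E$-separated in each colour.
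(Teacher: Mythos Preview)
Your proof is correct and is essentially the same argument as the paper's: both pick $K\in\mathcal{O}_c(G)$ controlling $E$, apply the definition of $\mathrm{dad}$ to obtain the open cover $U_0,\dots,U_d$ with relatively compact generated subgroupoids $H_i=\langle K\cap G|_{U_i}\rangle$, and then define $\mathcal{U}_i$ as the equivalence classes on $s^{-1}(U_i)$ of the relation $g\sim_i h\iff r(g)=r(h)$ and $g^{-1}h\in H_i$, with $F$ controlled by $\bigcup_i H_i$. The only cosmetic differences are that you explicitly adjoin $G^0$ to $B$ (harmless, and in fact redundant since $U_i\subseteq H_i$) and explicitly disjointify the $\mathcal{U}_i$.
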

\begin{proof}
	We may assume that $d\coloneqq\mathrm{dad}(G)<\infty$ since otherwise there is nothing to prove. Let $E$ be a controlled set for $G$. Since $G^0$ is compact, there exists a relatively compact open subset $G^0\subseteq K\subseteq G$ such that
	$$E\subseteq\lbrace (g,h)\mid g^{-1}h\in K\rbrace.$$ 
	Using the assumption, find open subsets $U_0,\ldots, U_d$ covering $G^0$ such that the subgroupoid $H_i\coloneqq \langle K\cap G|_{U_i}\rangle$ of $G$
	is relatively compact (and open) for every $i=0,\ldots,d$. Let $F\coloneqq\lbrace (g,h)\mid g^{-1}h\in \bigcup_{i=0}^d H_i\rbrace$.
	Then $F$ is by its very definition a controlled set for $G$. 
	Let $\sim_i$ denote the equivalence relation on $G_{U_i}$ given by
	$$g\sim_i h :\Leftrightarrow r(g)=r(h)\text{ and }g^{-1}h\in H_i.$$
	Let $\mathcal{U}_i\coloneqq\lbrace [g]_i\mid g\in G_{U_i}\rbrace$ be the collection of all equivalences classes of the relation $\sim_i$.
	
	Then each $\mathcal{U}_i$ is $E$-separated. Indeed, suppose $[g]_i\times[h]_i\cap E\neq \emptyset$. Then there exist $g_0,h_0\in G_{U_i}$ such that $g_0\sim_i g$ and $h_0\sim_i h$ such that $(g_0,h_0)\in E$. In particular, we have $g_0^{-1}h_0\in K$, which implies $g_0\sim_i h_0$ and hence $[g]_i=[h]_i$.
	
	Moreover, the collection $\mathcal{U}=\bigcup_{i=0}^d \mathcal{U}_i$ is an $F$-bounded cover for $G$:
	We have to show $[g]_i\times[g]_i\subseteq F$ for all $i=0,\ldots, d$ and $g\in G_{U_i}$.
	If $(g_1,g_2)\in [g]_i\times[g]_i$ then $g_1\sim_i g\sim_i g_2$ and hence $g_1^{-1}g_2\in H_i$ and hence $(g_1,g_2)\in F$.
\end{proof}

\begin{bem}
We remark that our assumption that $G^0$ is compact in the previous proposition cannot be relaxed. Consider for example the action of the integers $\ZZ$ on $\RR$ by translation. This action is free and proper. Using this it is not hard to show that $\mathrm{dad}(\ZZ\ltimes \RR)=0$. On the other hand, $\mathrm{asdim}(\ZZ)=1$.
\end{bem}

In what follows we want to prove the converse in the case that $G^0$ is zero-dimensional. The proof is inspired by the recent article \cite{CJMSTD20}.

We need a variant of dynamic asymptotic dimension that keeps track of the size of the subgroupoids obtained in the definition.
Given open relatively compact subsets $K, L\subseteq G$, we say that an open subgroupoid $H\subseteq G$ has $(K,L)$-dad at most $d$ if there exists a cover of $H^0\cap (s(K)\cup r(K))$ by open sets $U_0,\ldots, U_d$ such that $\langle  K\cap G|_{U_i}\rangle\subseteq L$ for all $0\leq i\leq d$.

Similarly, a coarse space $(X,\mathcal{E})$ has $(E,F)-\mathrm{asdim}$ at most $d$ if there exists a cover $\mathcal{U}$ of $X$ which is $F$-bounded and admits a decomposition $\mathcal{U}=\mathcal{U}_0\sqcup \ldots \sqcup \mathcal{U}_d$ such that each $\mathcal{U}_i$ is $E$-separated.

\begin{lemma}
    Let $G$ be an \'etale groupoid and $V\subseteq G^0$ an open subset such that $V=V_0\cup\cdots\cup V_n$ for some open subsets $V_i\subseteq G^0$. 
    Suppose further that $K_0\subseteq K_1\subseteq \ldots\subseteq K_{n+1}$ is an increasing sequence in $\mathcal{O}_c(G)$ such that
    $$\langle K_i^{15}\cap G|_{V_i}\rangle\subseteq K_{i+1} \ \forall 0\leq i\leq n.$$
    Then $\langle K_0\cap G|_V\rangle\subseteq K_{n+1}^5$.
\end{lemma}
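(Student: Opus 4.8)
I would prove this by induction on $n$, reducing at each step to the two-set gluing Lemma~\ref{lem:gluing}. The case $n=0$ is immediate, since $K_0\subseteq K_0^{15}$ forces $\langle K_0\cap G|_{V_0}\rangle\subseteq\langle K_0^{15}\cap G|_{V_0}\rangle\subseteq K_1\subseteq K_1^5$. For the inductive step I would establish, by induction on $j$, the statement that $\langle K_0\cap G|_{V_0\cup\cdots\cup V_j}\rangle\subseteq K_{j+1}^5$ for every $0\le j\le n$; the case $j=n$ is what we want.

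To pass from $j$ to $j+1$, write $V_0\cup\cdots\cup V_{j+1}=(V_0\cup\cdots\cup V_j)\cup V_{j+1}$ and apply Lemma~\ref{lem:gluing} to this two-set cover with generating set $K_0$, with the ``bottom'' set of the cover taken to be $V_0\cup\cdots\cup V_j$ and the ``top'' one $V_{j+1}$. The first hypothesis of Lemma~\ref{lem:gluing} asks for a relatively compact symmetric open set containing $\langle K_0\cap G|_{V_0\cup\cdots\cup V_j}\rangle$, and by the inductive hypothesis this groupoid lies in $K_{j+1}^5$; so I would feed (a suitable relatively compact symmetric open enlargement of) $K_{j+1}^5$ into the gluing lemma as its ``middle'' set. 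Its cube is then contained in $(K_{j+1}^5)^3=K_{j+1}^{15}$, so the second hypothesis of Lemma~\ref{lem:gluing} becomes exactly the assumption $\langle K_{j+1}^{15}\cap G|_{V_{j+1}}\rangle\subseteq K_{j+2}$ that we are given --- this is precisely why the exponent in the statement is $15=3\cdot 5$. Lemma~\ref{lem:gluing} then outputs $\langle K_0\cap G|_{V_0\cup\cdots\cup V_{j+1}}\rangle\subseteq K_{j+2}^5$, completing the induction.

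Equivalently, one can run the underlying trajectory argument of Lemma~\ref{lem:gluing} directly: given $g=g_1\cdots g_m$ with $g_i\in K_0\cap G|_V$, look at the chain of intermediate units $v_0=r(g_1),\dots,v_m=s(g_m)\in V$, isolate the first index $s$ and the last index $t$ at which the corresponding unit leaves $V_0$, note that the segments before $s-1$ and after $t+1$ stay inside $V_0$ (hence lie in $\langle K_0\cap G|_{V_0}\rangle\subseteq K_1$), that the two arrows $g_{s-1},g_{t+1}$ lie in $K_0$, and that the middle segment $g_s\cdots g_t$ --- after collapsing each maximal excursion back into $V_0$, which costs a factor $K_0\langle K_0\cap G|_{V_0}\rangle K_0\subseteq K_1^3$ --- is a product of elements of $K_1^3\cap G|_{V_1\cup\cdots\cup V_n}$; one then applies the inductive statement to the $n$-fold cover $V_1\cup\cdots\cup V_n$ with the shifted chain $K_1\subseteq\cdots\subseteq K_{n+1}$ and assembles the five pieces. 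The main thing to watch is the exponent bookkeeping: one must ensure that the generating families produced at each stage of the recursion stay inside the sets that the hypotheses $\langle K_i^{15}\cap G|_{V_i}\rangle\subseteq K_{i+1}$ actually control, and that the five-fold blow-up produced by a single gluing does not accumulate across the $n$ steps --- keeping the conclusion at $K_{n+1}^5$ rather than a power growing with $n$. This calibration of the two exponents is the only delicate point; the rest is the routine edge-case checking (empty initial or final segments, the behaviour of $G^0$ when it is compact, the closure properties $K=K\cup K^{-1}\cup s(K)\cup r(K)$ of the sets in $\mathcal{O}_c(G)$, etc.) already present in Lemma~\ref{lem:gluing}.
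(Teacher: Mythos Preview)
Your approach is exactly the paper's: the proof there is the single sentence ``This follows inductively from Lemma~\ref{lem:gluing},'' and your inductive scheme --- applying Lemma~\ref{lem:gluing} at step $j$ to the two-set cover $(V_0\cup\cdots\cup V_j)\cup V_{j+1}$ with middle set $K_{j+1}^5$ and top set $K_{j+2}$, so that $(K_{j+1}^5)^3=K_{j+1}^{15}$ matches the given hypothesis --- is precisely the intended unfolding. One point you flag as ``delicate'' but do not resolve: Lemma~\ref{lem:gluing} also asks for $K_1\subseteq K_2$, here $K_{j+1}^5\subseteq K_{j+2}$, which does \emph{not} follow from the stated hypotheses (only $K_{j+1}\subseteq K_{j+2}$ is assumed); in the paper's applications the $K_i$ are constructed inductively with enough slack that one may freely impose $K_i^5\subseteq K_{i+1}$ as well, so this is harmless in practice, but strictly speaking it is an extra hypothesis the inductive step needs.
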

\begin{proof}
    This follows inductively from Lemma \ref{lem:gluing}. 
\end{proof}

As a simple application of this Lemma, we obtain:

\begin{lemma}\label{lem:gluing1}
    Let $G$ be an \'etale groupoid.
	Suppose $G^0=X_0\sqcup X_1\sqcup\cdots\sqcup X_{n-1}$ is a clopen partition of $G^0$. Assume that there exists an increasing sequence $K_0\subseteq K_1\subseteq\cdots\subseteq K_{n}$ in $\mathcal{O}_c(G)$ such that
 each restriction $G|_{X_i}$ has $(K_i^{15}\cap G|_{X_i},K_{i+1})-\mathrm{dad}$ at most $d$. Then  $G$ has $(K_0,K_n^5)$-dynamic asymptotic dimension at most $d$.
\end{lemma}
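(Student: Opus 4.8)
The statement I want to prove is Lemma~\ref{lem:gluing1}: given a clopen partition $G^0 = X_0 \sqcup X_1 \sqcup \cdots \sqcup X_{n-1}$ and an increasing sequence $K_0 \subseteq K_1 \subseteq \cdots \subseteq K_n$ in $\mathcal{O}_c(G)$ such that each $G|_{X_i}$ has $(K_i^{15} \cap G|_{X_i}, K_{i+1})\text{-}\mathrm{dad}$ at most $d$, then $G$ has $(K_0, K_n^5)$-dynamic asymptotic dimension at most $d$.

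The first thing to notice: I want to apply the preceding Lemma, which is stated for a cover $V = V_0 \cup \cdots \cup V_n$ of an open set $V$ by open sets $V_i$ together with a chain $K_0 \subseteq \cdots \subseteq K_{n+1}$ satisfying $\langle K_i^{15} \cap G|_{V_i}\rangle \subseteq K_{i+1}$. But here I have $n$ pieces $X_0, \dots, X_{n-1}$, not $n+1$, and the $X_i$ are clopen and disjoint rather than merely open — so they are in particular open and do form an open cover of $G^0$. The mismatch in indexing is cosmetic; I would just relabel, taking the partition to have pieces indexed $0, \dots, n-1$ (so there are $n$ of them), and correspondingly the chain $K_0 \subseteq \cdots \subseteq K_n$ has $n+1$ terms, exactly matching the hypothesis of the preceding Lemma with its "$n$" replaced by "$n-1$". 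Once the relabeling is fixed, each $X_i$ plays the role of $V_i$.

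**Carrying out the argument.** Fix $i$ with $0 \le i \le n-1$. The hypothesis that $G|_{X_i}$ has $(K_i^{15} \cap G|_{X_i}, K_{i+1})\text{-}\mathrm{dad}$ at most $d$ gives, by definition, an open cover $U_{i,0}, \dots, U_{i,d}$ of $X_i^0 \cap (s(K_i^{15} \cap G|_{X_i}) \cup r(K_i^{15}\cap G|_{X_i}))$ — which I may as well take to cover all of $X_i$ after intersecting with $X_i$, since the sets $\langle (K_i^{15} \cap G|_{X_i}) \cap G|_{U_{i,j}}\rangle$ only get smaller when we shrink the $U_{i,j}$ — such that $\langle (K_i^{15}\cap G|_{X_i}) \cap G|_{U_{i,j}}\rangle \subseteq K_{i+1}$ for all $j$. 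Because $X_i$ is clopen and $G|_{X_i}$ is the restriction, $K_i^{15} \cap G|_{X_i} \cap G|_{U_{i,j}} = K_i^{15} \cap G|_{U_{i,j}}$ whenever $U_{i,j} \subseteq X_i$, so in fact $\langle K_i^{15} \cap G|_{U_{i,j}}\rangle \subseteq K_{i+1}$. Now set, for each $j \in \{0, \dots, d\}$, $V_j \coloneqq \bigsqcup_{i=0}^{n-1} U_{i,j}$, an open subset of $G^0$ (a disjoint union of opens, one in each clopen piece). The key point is that for a fixed $j$, the groupoid $\langle K_i^{15} \cap G|_{V_j}\rangle$ decomposes according to the clopen partition: an element of this generated groupoid is a product of elements of $K_i^{15}$ all of whose range and source lie in $V_j$; since the $X_i$ are clopen and invariant under the "same-fiber" relation in the sense that $G|_{V_j} = \bigsqcup_i G|_{U_{i,j}}$, one checks that $\langle K_i^{15} \cap G|_{V_j}\rangle = \bigsqcup_i \langle K_i^{15} \cap G|_{U_{i,j}}\rangle$. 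Hence for the chain step with the $i$-th piece I need $\langle K_i^{15} \cap G|_{U_{i,j}}\rangle \subseteq K_{i+1}$, which is exactly what I arranged — but to feed into the preceding Lemma cleanly I should instead set up, for each fixed $j$, the preceding Lemma with $V = G^0$, $V_i = U_{i,j}$ (possibly padded by empty sets if some piece of the partition is empty), and the same chain $K_0 \subseteq \cdots \subseteq K_n$: the hypothesis $\langle K_i^{15} \cap G|_{U_{i,j}}\rangle \subseteq K_{i+1}$ holds, so the preceding Lemma yields $\langle K_0 \cap G|_{G^0}\rangle = \langle K_0 \rangle \subseteq K_n^5$. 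But wait — that is not quite what I want either; I want a statement about $\langle K_0 \cap G|_{U_{i,j}}\rangle$ for the fixed cover in the definition of $(K_0, K_n^5)$-dad of $G$. So more precisely: the sets $V_j = \bigsqcup_i U_{i,j}$, $j = 0, \dots, d$, form an open cover of $G^0$ (each $X_i$ is covered by the $U_{i,j}$), and I must check $\langle K_0 \cap G|_{V_j}\rangle \subseteq K_n^5$. Using the decomposition $\langle K_0 \cap G|_{V_j}\rangle = \bigsqcup_i \langle K_0 \cap G|_{U_{i,j}}\rangle$, and noting $\langle K_0 \cap G|_{U_{i,j}}\rangle \subseteq \langle K_i^{15} \cap G|_{U_{i,j}}\rangle \subseteq K_{i+1} \subseteq K_n \subseteq K_n^5$ (since $K_0 \subseteq K_i^{15}$ and $G^0 \subseteq K_n$ so $K_n \subseteq K_n^5$), each piece is contained in $K_n^5$, hence so is the union. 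Actually this direct argument bypasses the preceding Lemma entirely — but it is cleaner to invoke the preceding Lemma applied fiberwise to get the sharper bound $K_n^5$ rather than just $K_n$; either way works, and I will phrase it through the preceding Lemma as advertised by the sentence "This follows inductively from Lemma~\ref{lem:gluing}" / "As a simple application of this Lemma".

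**The main obstacle.** The substantive point — and the only place care is really needed — is the decomposition $\langle K \cap G|_{V}\rangle = \bigsqcup_i \langle K \cap G|_{U_i}\rangle$ when $V = \bigsqcup_i U_i$ with each $U_i$ contained in a distinct clopen piece $X_i$ of a partition of $G^0$. The containment $\supseteq$ is trivial. For $\subseteq$: any element $g$ of the left side is a product $g_1 \cdots g_m$ with each $g_\ell \in K$ and $s(g_\ell), r(g_\ell) \in V$; since $r(g_\ell) = s(g_{\ell-1})$ lies in some unique $U_i \subseteq X_i$, and clopenness means membership in $X_i$ propagates along composable strings, all the $g_\ell$ have range and source in the same $U_i$, so $g \in \langle K \cap G|_{U_i}\rangle$. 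This is where clopenness of the partition is genuinely used — for a merely open cover it would be false. Once this is in hand, the rest is bookkeeping: relabel indices so the preceding Lemma applies with its "$n$" equal to our "$n-1$", apply it to each of the $d+1$ disjoint-union covers $V_j = \bigsqcup_i U_{i,j}$, and conclude that $\{V_0, \dots, V_d\}$ witnesses $(K_0, K_n^5)$-dad of $G$ at most $d$.
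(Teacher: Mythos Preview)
Your proof contains a genuine error in what you call ``the main obstacle''. The decomposition
\[
\langle K \cap G|_{V_j}\rangle \;=\; \bigsqcup_{i} \langle K \cap G|_{U_{i,j}}\rangle
\]
is false in general. Clopenness of the $X_i$ in $G^0$ does \emph{not} force arrows to respect the partition: an element $g_\ell \in K_0$ can have $s(g_\ell) \in U_{0,j} \subseteq X_0$ and $r(g_\ell) \in U_{1,j} \subseteq X_1$, since the $X_i$ are not assumed to be $G$-invariant (and in the application of this lemma they certainly are not). Your claim that ``clopenness means membership in $X_i$ propagates along composable strings'' confuses a clopen partition of the unit space with an orbit decomposition. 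Consequently a product $g_1 \cdots g_m$ in $\langle K_0 \cap G|_{V_j}\rangle$ may hop between the pieces $U_{i,j}$, and your ``direct argument'' collapses.

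The correct argument is already present in your write-up, but you have entangled it with the faulty one and misidentified where the content lies. For each fixed $j$, apply the preceding Lemma with $V = V_j = \bigcup_i U_{i,j}$ and pieces $V_i = U_{i,j}$ (with the index shift $n \mapsto n-1$). The hypothesis $\langle K_i^{15} \cap G|_{U_{i,j}}\rangle \subseteq K_{i+1}$ is exactly what you extracted from the $(K_i^{15} \cap G|_{X_i}, K_{i+1})$-dad assumption on $G|_{X_i}$, and the conclusion $\langle K_0 \cap G|_{V_j}\rangle \subseteq K_n^5$ is exactly what is needed. The preceding Lemma (and ultimately Lemma~\ref{lem:gluing}) is doing real work here --- it is precisely the device that controls products which cross between the $U_{i,j}$ --- and cannot be bypassed. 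This is the paper's intended argument; note also that neither disjointness nor closedness of the $X_i$ is actually used, only openness.
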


The next Lemma is well-known:
\begin{lemma}
	Let $G$ be a compact principal ample groupoid. Then there exists a clopen fundamental domain $Y_*\subseteq H^0$, i.e. $Y_*$ meets each $H$-orbit in $H^0$ exactly once.
\end{lemma}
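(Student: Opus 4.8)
The statement is that a compact principal ample groupoid $H$ admits a clopen fundamental domain $Y_*\subseteq H^0$ meeting each orbit exactly once. The first thing I would do is recall that, since $H$ is ample, the compact open bisections form a base for the topology on $H$, and in particular $H^0$ has a base of compact open sets. Since $H$ is compact, we may cover $H$ by finitely many compact open bisections $B_1,\dots,B_m$; then the sets $r(B_j)$ are compact open subsets of $H^0$ whose corresponding bisections ``move'' points only within orbits. The key point is that the equivalence relation $R=\{(s(h),r(h))\mid h\in H\}\subseteq H^0\times H^0$ is, under the compactness and ampleness hypotheses, a \emph{compact open} equivalence relation on the compact zero-dimensional space $H^0$: openness of $R$ follows because $r\times s$ maps the open set $H$ onto $R$ and $r,s$ are local homeomorphisms, and $R$ is closed (hence compact) because $H$ is compact and principal, so $H\cong R$ via $h\mapsto(r(h),s(h))$ and the image of a compact set is compact, hence closed in the Hausdorff space $H^0\times H^0$.

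Next I would exploit that a closed-and-open equivalence relation on a compact zero-dimensional space has clopen orbits and a clopen transversal. Concretely: fix the finite cover of $H$ by compact open bisections $B_1,\dots,B_m$ as above. For each point $x\in H^0$, the orbit $Hx=\{r(h)\mid s(h)=x\}=\bigcup_j r(B_j\cap s^{-1}(x))$ is a finite set (bounded by $m$, since each bisection contributes at most one point), and in fact all orbits have uniformly bounded cardinality $\le m$. A bounded-orbit clopen equivalence relation on a compact zero-dimensional space partitions the space into finitely many clopen pieces according to orbit-size and ``pattern,'' and on each piece one can choose a clopen section. I would make this precise by induction on the (finitely many) possible orbit cardinalities: the set $Z_k=\{x\in H^0\mid |Hx|=k\}$ is clopen for each $k$ (it is defined by the condition that exactly $k$ of the partial maps given by the $B_j$'s produce distinct values, a clopen condition since all the $B_j$ are compact open and $H^0$ is Hausdorff zero-dimensional), and within $Z_k$ the relation $R|_{Z_k}$ is a clopen equivalence relation with all orbits of size exactly $k$.

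Finally, within each $Z_k$ I would produce a clopen transversal directly. Since $H|_{Z_k}$ is still compact ample principal with all orbits of size $k$, I can inductively peel off a clopen set: let $B$ be one of the finitely many compact open bisections covering $H|_{Z_k}$ with $s(B)=r(B)=Z_k$ impossible for $k\ge 2$ in general, so instead I would argue that $r\colon H|_{Z_k}\to Z_k$ is a $k$-to-one covering-type map, refine the cover so that the $B_j$ are disjoint, and then the ``first'' return map lets me select one point per orbit: set $Y_*^{(k)}=Z_k\setminus\bigcup\{r(B_j)\cap(\text{orbit-interior}) : \dots\}$, choosing representatives via a clopen well-ordering coming from the finite disjoint bisection cover. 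Taking $Y_*=\bigsqcup_k Y_*^{(k)}$ gives the desired clopen fundamental domain.

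\textbf{Main obstacle.} The routine parts are the compactness/openness of $R$ and the clopenness of the sets $Z_k$. The genuine difficulty is the last step: making a \emph{consistent} clopen choice of one representative per orbit, rather than merely locally. The standard way to overcome this — and the approach I expect the authors take, which is why they call it ``well-known'' — is to use the finiteness of the cover by compact open bisections to linearly order the finitely many ``sheets'' and then define $Y_*$ as the set of points that are ``minimal'' in their orbit with respect to this ordering; checking this set is clopen reduces to finitely many clopen conditions. I would cite or mimic the analogous statement for compact open equivalence relations (e.g. as in the theory of AF/ample groupoids) rather than reproving it from scratch.
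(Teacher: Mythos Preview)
The paper does not supply a proof of this lemma; it simply records it as ``well-known'' and moves on. So there is nothing to compare your argument against.

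That said, your plan is essentially the standard argument and is correct. A few comments. First, the decomposition into the clopen level sets $Z_k$ according to orbit cardinality is harmless but unnecessary: once you have a finite clopen partition $\mathcal{P}=\{P_1,\dots,P_n\}$ of $H^0$ such that each $P_i$ meets every orbit in at most one point (which you obtain exactly as you indicate, from a finite cover of $H$ by pairwise disjoint compact open bisections), you can directly set
\[
Y_*\;=\;\bigsqcup_{i=1}^n\Bigl(P_i\setminus \mathrm{sat}\bigl(\textstyle\bigcup_{j<i}P_j\bigr)\Bigr),
\]
where $\mathrm{sat}(A)=r(s^{-1}(A))$ is the $H$-saturation. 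Since $H$ is compact \'etale, $\mathrm{sat}$ takes clopen sets to clopen sets, so $Y_*$ is clopen; and by construction it picks from each orbit the unique point lying in the $P_i$ of smallest index. This is precisely the ``minimal in the ordering'' selection you describe at the end, and it works uniformly without first stratifying by orbit size.

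Second, your identification of the main obstacle is on target: the only non-formal step is the existence of such a partition $\mathcal{P}$, and that follows from compactness of $H\setminus H^0$ together with zero-dimensionality of $H^0$, exactly along the lines you sketch. The rest (openness and compactness of the orbit relation, finiteness and uniform boundedness of orbits) is routine, as you say.
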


The key technicalities for the main result of this section are contained in the following Lemma. The main obstacle in generalising the proof presented in \cite{CJMSTD20} for group actions to the case of general groupoids is that the decompositions of the fibres $G^x$ obtained from the finite asymptotic dimension assumption are not uniform as $x$ varies. In contrast, if we take a decomposition of $\Gamma=T_0\sqcup \ldots\sqcup T_d$ as in the definition of asymptotic dimension at most $d$, then $T_i\times X$ gives a decomposition of the transformation groupoid, which works uniformly over $X$.
\begin{lemma}\label{lem:asdim->dad} Let $G$ be a pricipal, ample groupoid and let $Y\subseteq G^0$ be a clopen subset. Assume further, that for given $K, L\subseteq G$ compact open, the groupoid
	$$H_K(Y)\coloneqq \langle K\cap G|_Y\rangle$$
	is compact.
	If $(K,L)-\mathrm{asdim}(G)\leq d$, then $(K,L)-\mathrm{dad}(G|_Y)\leq d$.

\end{lemma}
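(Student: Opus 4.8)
The plan is to transfer a bounded decomposition of the coarse space $(G,\mathcal{E}_G)$ into a dynamic asymptotic dimension decomposition of $G|_Y$, using the hypothesis that $H_K(Y)$ is compact to control the passage from fibrewise data to open covers of the unit space. First I would set up the relevant controlled sets: given the compact open set $K$ (which I enlarge so that $K=K\cup K^{-1}\cup s(K)\cup r(K)$ and $G^0\subseteq K$ if $G^0$ is compact), let $L'$ be a compact open set large enough to absorb the relevant products, and let $E_{K}\coloneqq\{(g,h)\mid r(g)=r(h),\ g^{-1}h\in K\}$ and $F_{L'}\coloneqq\{(g,h)\mid r(g)=r(h),\ g^{-1}h\in L'\}$ be the corresponding entourages. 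Applying $(K,L)\text{-}\mathrm{asdim}(G)\leq d$ (with a suitably chosen $E$ depending on $K$ and a resulting $F$ giving $L$), I obtain a cover $\mathcal{U}=\mathcal{U}_0\sqcup\cdots\sqcup\mathcal{U}_d$ of $G$ that is $F$-bounded with each $\mathcal{U}_i$ being $E$-separated.

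Next I would produce the open cover of $Y\cap(s(K)\cup r(K))$. Fix $i$. For a point $u\in Y\cap(s(K)\cup r(K))$, the range fibre $G^u$ is hit by finitely many members of $\mathcal{U}_i$ near $u$; the key point is that since $G$ is ample and $Y$ is clopen, $K\cap G|_Y$ is compact open, and I can use a compact open bisection argument to define, for each $i$, the set
$$U_i\coloneqq\{u\in Y\cap(s(K)\cup r(K))\mid \exists\, g,h\in K\cap G|_Y\text{ with }r(g)=r(h)=u,\ s(g),s(h)\in Y,\ (g,h)\notin E\text{-separated pair across distinct }U_i\text{-members}\}^c,$$
i.e.\ the set of units whose preimage in $K\cap G|_Y$ lands inside a single $\mathcal{U}_i$-member modulo $E$. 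More cleanly: because each element of $K\cap G|_Y$ lies in some member of $\mathcal{U}_i$ and $E$-separation forces elements of $K\cap G|_Y$ over the same unit that are $E$-close to lie in the same member, I can define $U_i$ as the set of $u$ for which all of the (finitely many, locally constant in $u$ by ampleness) arrows in $K\cap G|_Y$ over $u$ and their products land in one $F$-bounded member. Openness of $U_i$ follows because $K\cap G|_Y$ is a finite union of compact open bisections, so membership is a locally constant condition; that $U_0,\dots,U_d$ cover $Y\cap(s(K)\cup r(K))$ follows from the fact that $\mathcal{U}$ covers all of $G$ and every relevant arrow lies in some $\mathcal{U}_i$.

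Finally I would verify that $\langle K\cap G|_{U_i}\rangle\subseteq L$. Take $g=g_m\cdots g_1$ with $g_j\in K\cap G|_{U_i}$; using that the sources and ranges of the $g_j$ lie in $U_i$, an inductive argument (as in Lemma~\ref{lem:gluing}, using that $E$-separation of $\mathcal{U}_i$ and $F$-boundedness of each member force consecutive pieces to stay inside a single $F$-bounded member) shows $g^{-1}$, equivalently each partial product, lies in a translate controlled by $F$, hence $g\in L$ after the earlier choice relating $E,F$ to $K,L$; compactness of $H_K(Y)$ guarantees the constructed subgroupoids are honestly compact rather than merely norm-bounded, and it is also what lets me pass between the coarse-geometric $r(g)=r(h)$ condition and genuine products in $G|_Y$. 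The main obstacle I expect is exactly the construction of the open sets $U_i$ and the proof that they are open: the coarse decomposition $\mathcal{U}$ is a cover of $G$ by arbitrary subsets, not by structured open sets, so one must use ampleness of $G$ and clopenness of $Y$ crucially to extract, from the set-theoretic partition of arrows, a genuine \emph{open} cover of the unit space — and one must check the covering and containment properties survive this extraction.
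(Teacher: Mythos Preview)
Your proposal has a genuine gap at the two decisive points: the construction of the $U_i$ and the verification that $\langle K\cap G|_{U_i}\rangle\subseteq L$.

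For the construction, your definition of $U_i$ is not workable as stated. The coarse cover $\mathcal{U}=\mathcal{U}_0\sqcup\cdots\sqcup\mathcal{U}_d$ consists of \emph{arbitrary} subsets of $G$ with no topological structure whatsoever, so the condition ``all arrows in $K\cap G|_Y$ over $u$ land in a single $\mathcal{U}_i$-member'' is not locally constant and need not cut out an open set. Ampleness alone does not fix this: one needs to first restrict the coarse decomposition to the \emph{finite} set $H_K(Y)^x$ (this is where compactness of $H_K(Y)$ is really used), and then thicken each finite piece to a union of compact open bisections with prescribed algebraic relations. The paper does this pointwise in $x$ and then uses compactness of $Y$ to pass to a finite clopen partition; your sketch skips this step.

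More seriously, your verification that $\langle K\cap G|_{U_i}\rangle\subseteq L$ cannot succeed as outlined. Given $g=g_m\cdots g_1$ with $g_j\in K\cap G|_{U_i}$, the elements $g_j$ do not share a common range, so $E$-separation and $F$-boundedness of $\mathcal{U}_i$ (which only constrain pairs with equal range) say nothing directly about the product. The paper's mechanism is entirely different: one first chooses a clopen \emph{fundamental domain} $Y_*\subseteq Y$ for $H_K(Y)$, then defines $U_i$ via $Y_*$ so that each $s(g_j)$ and $r(g_j)$ is $s(h)$ for some $h\in T_i$ with $r(h)\in Y_*$. Because $Y_*$ meets each orbit exactly once, all these $h$'s have the \emph{same} range, and \emph{principality} forces $g_j=(h_j')^{-1}h_j$ and $h_j=h_{j+1}'$, so the product telescopes to a single $(h_1')^{-1}h_m\in D_{i,j}^{-1}D_{i,j}\subseteq L$. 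Your argument invokes neither a fundamental domain nor principality, and the reference to Lemma~\ref{lem:gluing} is a red herring---that lemma glues covers of two pieces of $G^0$ and has no bearing on bounding products inside a single $U_i$.
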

\begin{proof}
		Fix $x\in Y$ for the moment. We can use the assumption to obtain a partition $G^x=\widetilde{T}_0^x\sqcup\ldots\sqcup \widetilde{T}_d^x$ where each $\widetilde{T}^x_i$ further decomposes as $$\widetilde{T}^x_i=\bigsqcup_{j}\widetilde{D}^x_{i,j}$$ such that each $\widetilde{D}^x_{i,j}$ is $L$-bounded and $\widetilde{D}^x_{i,j_1}$ and $\widetilde{D}^x_{i,j_2}$ are $K$-disjoint for all $j_1\neq j_2$. Intersecting each component of this partition with $H_K(Y)$ gives a partition of $H_K(Y)^x$ with the same properties. Let $D_{i,j}^x\coloneqq\widetilde{D}_{i,j}^x\cap H_K(Y)$ and $T_i^x\coloneqq\widetilde{T}_{i,j}^x\cap H_K(Y)$ denote these intersections.
		
        Our first goal is to show that we can make this decomposition work uniformly over a neighbourhood $V_x$ of $x$.
		Note that since $H_K(Y)$ is compact, only finitely many of the sets $D_{i,j}^x$ are non-empty and each of them has to be finite. For fixed $i,j$ write $D_{i,j}^x=\{g_1,\ldots,g_n\}$. Using continuity of the product map we can find a compact open bisection $U_k$ around each $g_k$ such that $U_k^{-1}U_l\subseteq L$. We may also assume that the $U_k$ are pairwise disjoint and all have the same range (if that's not the case, replace $U_k$ by $U_k\cap r^{-1}(\bigcap_l r(U_l))$). Let $D_{x,i,j}\coloneqq\bigcup_{l=1}^n U_l$ (we put the $x$ in the subscript to indicate that this set still depends on $x$, but need not be a subset of $G^x$). By construction $D_{x,i,j}$ is compact open and $L$-bounded. By enumerating the set $\{j\mid D_{i,j}^x\neq \emptyset\}$ we can successively shrink the sets $D_{x,i,j}$ to make sure that they remain $K$-disjoint. Let  $V_x\coloneqq\bigcap_{i,j} r(D_{x,i,j})$. Then $V_x$ is a clopen neighbourhood of $x$. Replace each $D_{x,i,j}$ by $D_{x,i,j}\cap r^{-1}(V_x)$ and let $T_{x,i}\coloneqq\bigsqcup_i D_{x,i,j}$. Then for each $y\in V_x$ $H_K(Y)^y=T_{x,0}^y\sqcup\ldots\sqcup T_{x,d}^y$ and each $T_{x,i}^y$ further decomposes as
		$$T^y_{x,i}=\bigsqcup_{j}D^y_{x,i,j}.$$
        For $y=x$ we recover the original decomposition found in the beginning, i.e. $D^x_{x,i,j}=D_{i,j}^x$.
        
        Using that $Y$ is compact we can thus partition $Y$ by finitely many sets $V_1,\ldots V_p$ such that for each $k\in \{1,\ldots, p\}$ there exists a partition $H_K(Y)\cap G^{V_k}=T_{k,0}\sqcup\ldots T_{k,d}$ such that each partition further decomposes as a $K$-disjoint union $$T_{k,i}=\bigsqcup_{j}D_{k,i,j},$$
        of $L$-bounded sets.
        
		Let $D_{i,j}=\bigsqcup_{k=1}^p D_{k,i,j}$ and $T_i=\bigsqcup_{k=1}^p T_{k,i}$. Pick a clopen subset $Y_*\subseteq Y$ that meets each $H_K(Y)$-orbit exactly once and set
		$$U_i\coloneqq\{s(g)\mid g\in H_K(Y)\cap T_i, r(g)\in Y_*, s(g)\in Y\}.$$ Then $U_0,\ldots, U_d$ are clearly clopen subsets of $Y$.
		We have to show that $\langle K\cap G|_{U_i}\rangle$ is contained in $L$. To this end write an arbitrary $g\in \langle K\cap G|_{U_i}\rangle$ as $g=g_1g_2\cdots g_n$ with $g_k\in K$ and $s(g_k),r(g_k)\in U_i$. It follows that for each $1\leq k\leq n$ there exist $h_k,h'_k\in T_i$ with $r(h_k),r(h_k')\in Y_*$ and $s(g_k)=s(h_k)$ and $r(g_k)=s(h'_k)$. Note that the set $\{r(h_k),r(h'_k)\mid 1\leq k\leq n\}$ is contained in a single $H_K(Y)$-orbit. Since $Y_*$ meets each $H_K(Y)$-orbit exactly once we must have $r(h_1)=r(h'_1)=\ldots=r(h_k)=r(h'_k)$. Hence $h_k^{-1}h'_kg_k\in \mathrm{Iso}(G)=G^0$, so using principality of $G$ we get $g_k=(h'_k)^{-1}h_k$. Moreover, since $g_k\in K$, the elements $h_k$ and $h_k'$ are in the same $D_{i,j}$. So we can write 
		$$g=(h'_1)^{-1}h_1(h'_2)^{-1}h_2\cdots (h'_n)^{-1}h_n.$$
		Note further, that since $s(h_k(h'_{k+1})^{-1})=r(h'_{k+1})=r(h_k)=r(h_k(h'_{k+1})^{-1})$ we can use principality again to conclude that $h_k=h'_{k+1}$. In particular, there exists a unique $j$ such that $h_k,h'_k\in D_{i,j}$ for all $1\leq k\leq n$. Putting these two facts together we obtain
		$$g=(h'_1)^{-1}h_n\in D_{i,j}^{-1}D_{i,j}\subseteq L$$
		as desired.
\end{proof}
We can now proceed with the proof of the second half of Theorem \ref{Thm:dad=sup asdim}.
\begin{satz}
	Let $G$ be a principal \'etale groupoid with $\sigma$-compact and totally disconnected unit space. If $\mathrm{dad}(G)<\infty$, then
	$$\mathrm{dad}(G)\leq \mathrm{asdim}(G,\mathcal{E}_G).$$
	If $G^0$ is compact, equality holds.
\end{satz}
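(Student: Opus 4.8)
The plan is to prove $\dad{G}\leq\mathrm{asdim}(G,\mathcal{E}_G)$ first under the extra assumption that $G^0$ is compact, and then to obtain the $\sigma$-compact case by exhausting $G$ by its restrictions to compact open subsets of $G^0$ and applying Proposition~\ref{prop:limit}. Write $e\coloneqq\dad{G}$ and $d\coloneqq\mathrm{asdim}(G,\mathcal{E}_G)$, and assume $d<\infty$ (otherwise there is nothing to prove). Note that $G$, as well as every restriction $G|_Y$ to a clopen $Y\subseteq G^0$, is principal and ample, so Lemmas~\ref{lem:asdim->dad} and~\ref{lem:gluing1} are available.

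Suppose first that $G^0$ is compact, and fix $K\in\mathcal{O}_c(G)$. To apply Lemma~\ref{lem:gluing1} we must produce a finite clopen partition $G^0=X_0\sqcup\dots\sqcup X_{n-1}$ together with a chain $K_0\subseteq\dots\subseteq K_n$ in $\mathcal{O}_c(G)$ so that each $G|_{X_i}$ has $(K_i^{15}\cap G|_{X_i},K_{i+1})$-dad at most $d$. There is an apparent circularity here — the chain is extracted from the asymptotic dimension hypothesis applied to the individual pieces, while the pieces must be small enough relative to the chain to make each $\langle K_i^{15}\cap G|_{X_i}\rangle$ relatively compact — and the key point is that it can be broken by building the chain first. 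Unravelling the definition of $\mathrm{asdim}(G,\mathcal{E}_G)\leq d$, one sees that for every $M\in\mathcal{O}_c(G)$ there is some $L(M)\in\mathcal{O}_c(G)$ with $M\subseteq L(M)$ and $(M,L(M))\text{-}\mathrm{asdim}(G)\leq d$: the controlled set attached to $M$ admits an $F$-bounded cover of $G$ splitting into $d+1$ $M$-separated families, and $F$ is contained in the controlled set of a suitable $L\in\mathcal{O}_c(G)$. Now set $K_0\coloneqq K$ and $K_{i+1}\coloneqq K_i\cup L(K_i^{15})\in\mathcal{O}_c(G)$; this is an increasing chain that refers to no partition.

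The partition will have exactly $e+1$ pieces, so only the links $K_0,\dots,K_{e+1}$ of the chain matter, and we may adapt the partition to the largest power $K_e^{15}$. Applying the definition of $\dad{G}=e$ to $K_e^{15}$ produces an open cover $U_0,\dots,U_e$ of $G^0$ with each $\langle K_e^{15}\cap G|_{U_i}\rangle$ relatively compact, hence compact open ($G$ being ample); refining this cover inside a finite clopen partition of $G^0$ (which exists as $G^0$ is compact and totally disconnected) and reassigning each piece to one member of the cover, we obtain a clopen partition $G^0=X_0\sqcup\dots\sqcup X_e$ (with possibly empty pieces) such that $X_i\subseteq U_i$ for each $i$. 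Since $K_i^{15}\subseteq K_e^{15}$ and $X_i\subseteq U_i$, the subgroupoid $\langle K_i^{15}\cap G|_{X_i}\rangle=H_{K_i^{15}}(X_i)$ is contained in the compact $\langle K_e^{15}\cap G|_{U_i}\rangle$; Lemma~\ref{lem:asdim->dad}, applied to $G$, the clopen set $X_i$ and the pair $(K_i^{15},L(K_i^{15}))$, therefore gives $(K_i^{15},L(K_i^{15}))\text{-}\mathrm{dad}(G|_{X_i})\leq d$. As the cover it produces lies inside $X_i$ and $L(K_i^{15})\subseteq K_{i+1}$, this yields $(K_i^{15}\cap G|_{X_i},K_{i+1})\text{-}\mathrm{dad}(G|_{X_i})\leq d$ for all $0\leq i\leq e$, and Lemma~\ref{lem:gluing1} (with $n=e+1$) gives that $G$ has $(K_0,K_{e+1}^5)$-dynamic asymptotic dimension at most $d$. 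Since $K_{e+1}^5$ is relatively compact and every open relatively compact subset of $G$ is contained in a member of $\mathcal{O}_c(G)$, this proves $\dad{G}\leq d$. Combined with the Proposition proved above giving $\mathrm{asdim}(G,\mathcal{E}_G)\leq\dad{G}$ for compact $G^0$, this yields the claimed equality in the compact case.

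For the general case, write $G^0=\bigcup_n Y_n$ as an increasing union of compact open subsets, which is possible since $G^0$ is locally compact, $\sigma$-compact and totally disconnected. Each $G|_{Y_n}$ is a clopen — in particular closed — open subgroupoid with compact, totally disconnected unit space; it is principal, satisfies $\dad{G|_{Y_n}}\leq\dad{G}<\infty$ by Lemma~\ref{lem:basicpermanence}, and, since its canonical coarse structure agrees with the coarse structure it inherits from $(G,\mathcal{E}_G)$ as a subspace, also satisfies $\mathrm{asdim}(G|_{Y_n},\mathcal{E}_{G|_{Y_n}})\leq d$. By the compact case already proved, $\dad{G|_{Y_n}}\leq d$ for every $n$; since $\overline{G|_{Y_n}}=G|_{Y_n}\subseteq G|_{Y_{n+1}}$ and $G=\bigcup_n G|_{Y_n}$, Proposition~\ref{prop:limit} gives $\dad{G}\leq\liminf_n\dad{G|_{Y_n}}\leq d$. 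The main obstacle throughout is the circularity identified in the second paragraph; it is overcome by the observation that disjointifying a dynamic-asymptotic-dimension cover produces a clopen partition of length exactly $\dad{G}+1$, which caps the relevant part of the chain $(K_i)$ and lets the partition be chosen last.
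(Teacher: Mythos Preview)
Your proof is correct and follows essentially the same approach as the paper: build the chain $(K_i)$ from the asymptotic-dimension hypothesis first, then use the finite dynamic asymptotic dimension to obtain a clopen partition adapted to the top of the chain, feed each piece into Lemma~\ref{lem:asdim->dad}, glue via Lemma~\ref{lem:gluing1}, and finish the $\sigma$-compact case by exhaustion and Proposition~\ref{prop:limit}. You are in fact a bit more careful than the paper in two places---applying the $\mathrm{dad}$ hypothesis to $K_e^{15}$ rather than $K_e$ (so that the compactness needed in Lemma~\ref{lem:asdim->dad} is genuinely available for every $i$), and spelling out how the open cover from the definition of $\mathrm{dad}$ is refined to a clopen partition---but the architecture is the same.
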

\begin{proof}
    We first prove the result under the additional assumption that $G^0$ is compact.
	Let $\mathrm{dad}(G)\leq D$ and $d=\mathrm{asdim}(G)$. Let $K$ be a compact open subset of $G$. We want to show $\mathrm{dad}(G)\leq d$. Inductively find an increasing sequence $(K_i)_i$ of compact opens such that $K\cup G^0\subseteq K_0$ and such that $G$ has $(K_i^{15}, K_{i+1})$-asdim at most $d$ for every $i$. Now use the assumption $\mathrm{dad}(G)\leq D$ to find clopen subsets $X_0,\ldots X_D$ such that 
	$$H_i\coloneqq \langle K_D\cap G|_{X_i}\rangle$$
	is a compact open subgroupoid. Apply Lemma \ref{lem:asdim->dad} to each $X_i$ to see that $(K_i^{15},K_{i+1})-\mathrm{dad}(G|_{X_i})\leq d$. Hence Lemma \ref{lem:gluing} implies that $(K_0,K_D^5)-\mathrm{dad}(G)\leq d$. As we started with an arbitrary $K$, this implies that $\mathrm{dad}(G)\leq d$ as desired.
	
	If $G^0$ is just locally compact and $\sigma$-compact, then there exists a nested sequence of compact open sets $W_n\subseteq G^0$ covering $G^0$. Let $G_n\coloneqq G|_{W_n}$ be the restriction of $G$ to $W_n$. Then $(G_n)_n$ is a nested sequence of compact open subgroupoids of $G$ and hence $\mathrm{dad}(G_n)\leq \mathrm{dad}(G)<\infty$. By the first part of this proof, $\mathrm{dad}(G_n)\leq \mathrm{asdim}(G_n,\mathcal{E}_{G_n})\leq \mathrm{asdim}(G,\mathcal{E}_G)$ and hence the result follows from Proposition \ref{prop:limit}.
\end{proof}

\begin{bem}
	Let us remark that the assumption $\mathrm{dad}(G)<\infty$ in the previous theorem cannot be dropped:
	Consider for example the free group $\mathbb{F}_2$. Since $\mathbb{F}_2$ is residually finite, it admits a decreasing sequence $N_1\supseteq N_2\supseteq \ldots$ of finite index normal subgroups such that $\bigcap_{k\in \mathbb N} N_k=\{e\}$. For each $k\in \mathbb N$ there is a canonical surjective group homomorphism $\mathbb{F}_2/N_{k+1}\to \mathbb{F}_2/N_k$. Let $X$ be the inverse limit of the sequence $\mathbb{F}_2/N_1 \leftarrow \mathbb{F}_2/N_2 \leftarrow\cdots$, which as a topological space is a Cantor set. The group $\mathbb{F}_2$ acts from the left on each quotient $\mathbb{F}_2/N_k$ and this induces a free and minimal action on $X$. The space $X$ also admits a unique Borel probability measure $\mu$, induced by the uniform probability measures on the (finite) quotients $\mathbb{F}_2/N_k$.
    It follows that the transformation groupoid $\mathbb{F}_2\ltimes X$ is non-amenable, and hence in particular it cannot have finite dynamic asymptotic dimension by \cite[Corollary~8.25]{GWY17}.
    On the other hand we have $\mathrm{asdim}(\mathbb{F}_2 \ltimes X)=\mathrm{asdim}(\mathbb{F}_2)=1$.
\end{bem}

\bibliographystyle{plain}
\bibliography{references}
\end{document}